\newtheorem{theorem}{Theorem}[section]
\newtheorem{protoc}[theorem]{Protocol}
\newtheorem{problem}[theorem]{Problem}
\newtheoremstyle{examplestyle} 
  {3pt}   
  {3pt}   
  {\normalfont}  
  {}      
  {\bfseries}  
  {:}     
  {1em}    
  {}      
\theoremstyle{examplestyle}
\newtheorem{example}{Example}[section]
\newtheorem{lemma}[theorem]{Lemma}
\newtheorem{definition}[theorem]{Definition}
\begin{document}

\markboth{A. Otero S\'anchez}
{Key exchange protocol based on circulant matrix action over congruence-simple semiring}

\title{Key exchange protocol based on circulant matrix action \\ over congruence-simple semiring
}

\author[1]{ Otero Sanchez, Alvaro\thanks{Autor de correspondencia: \texttt{aos073@ual.es}}}
\affil[1]{Department of Mathematics, University of Almería 04120 , Almería, Spain}

\maketitle

\begin{abstract}
We present a new key-exchange protocol based on circulant matrices acting on matrices over a congruence-simple semiring. We describe how to compute matrices with the necessary properties for the implementation of the protocol. In addition, we provide an analysis of its computational cost and its security against known attacks.

\end{abstract}

\section{Introduction}	
The foundational paper by \cite{diffiehellman} is considered to be the beginning of modern cryptography. In its original formulation, a cyclic group $\mathbb{Z}_n$ was used as the algebraic setting for the key exchange protocol. Later, a similar protocol based on elliptic curves was independently proposed by \cite{koblitz} and \cite{miller}.

These protocols have become the standard in cryptography, and their use is widely spread. However, in \cite{Shor97}, Shor presented a quantum algorithm capable of solving the discrete logarithm problem in both algebraic settings. Since the security of these cryptographic protocols is based on the hardness of this problem, a sufficiently large quantum computer would be able to break most of today’s key exchange protocols. As a result, the scientific community has initiated the search for new post-quantum cryptographic protocols, that is, protocols secure against quantum attacks.

One such proposal was made in \cite{maze}, where the authors introduced the use of abelian semigroups. The security of this protocol relies on the following problem:

\begin{problem}
    Let $(G, \cdot)$ be a semigroup, $S$ a set, and let $\varphi : G \times S \to S$ be an action of $G$ on $S$, i.e., $\varphi$ satisfies $\varphi(g \cdot h, s) = \varphi(g, \varphi(h, s))$. The \textit{Semigroup Action Problem} states that, given $x \in S$ and $y \in \varphi(G, x)$, find $g \in G$ such that $\varphi(g, x) = y$.
\end{problem}

In their work, the authors define a general framework for a new key exchange protocol using semimodules, which are analogous to modules but where the acting ring is replaced by a semiring, and the underlying structure is a commutative monoid. As an explicit example, they use congruence-simple semirings. In this setting, polynomials over the center of the semiring act on matrices over the semiring via evaluation at two fixed matrices and multiplication. They present an example using a semiring with six elements.

However, in \cite{Steinwandt11}, the authors perform a cryptanalysis of this specific example by solving a system of equations derived from the operation tables of the semiring. The viability of the protocol using other semirings remained an open question until \cite{oterolopez}, where the authors developed a general attack on the protocol for any congruence-simple semiring, assuming only a bound on the degree of the polynomials used in the protocol’s definition.

In this paper, we present a new key exchange protocol based on congruence-simple semirings. For this purpose, we consider exponentiation as an action of $\mathbb{N}$ on matrices over the semiring. This key exchange can be seen as a particular instance of the protocol in \cite{maze}, but with a different action. Additionally, we analyze its resistance to known attacks and evaluate its computational cost.

\section{Mathematical setting}

First, we will introduce the concept of semigroup
\begin{definition}
A \emph{semigroup} is a set $S$ equipped with an internal binary operation $\cdot : S \times S \to S$ such that the operation is associative; that is, for every $a, b, c \in S$, we have $(a \cdot b) \cdot c = a \cdot (b \cdot c)$. 

We say that $S$ is \emph{commutative} if, in addition, the operation satisfies $a \cdot b = b \cdot a$ for all $a, b \in S$.
\end{definition}

Now, we will recall the well know circulant matrix
\begin{definition}
    Let $R$ be a ring. A matrix $C\in \rm{Mat}_n(R)$ is called circulant if there are $c_0,c_1,\cdots, c_{n-1}\in R$ such that
    \[
C=\begin{pmatrix}
c_0 & c_{n-1} & c_{n-2} & \cdots & c_1 \\
c_1 & c_0 & c_{n-1} & \cdots & c_2 \\
c_2 & c_1 & c_0 & \cdots & c_3 \\
\vdots & \vdots & \vdots & \ddots & \vdots \\
c_{n-1} & c_{n-2} & c_{n-3} & \cdots & c_0
\end{pmatrix}
\]
We will denote $C$ as $C=Circ(c_0,\cdots, c_{n-1})$

\end{definition}

A famous result regarding the structure of circulant matrix is
\begin{theorem}
    The set $Circ_n(R)$ of ciruclar matrix of $n\times n$ over $R$ form a commutative subring of $\rm{Mat}_n(R)$. 
\end{theorem}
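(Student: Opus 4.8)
The plan is to identify $Circ_n(R)$ with the subring of $\mathrm{Mat}_n(R)$ generated by a single matrix, the cyclic shift, thereby reducing every assertion to the commutativity of a polynomial ring. Throughout I would index rows and columns by $\mathbb{Z}/n\mathbb{Z}$, so that $C=Circ(c_0,\dots,c_{n-1})$ is the matrix with entries $C_{ij}=c_{i-j}$, subscripts read modulo $n$; one checks directly that this is exactly the matrix displayed in the definition (e.g. $C_{11}=c_0$ and $C_{12}=c_{-1}=c_{n-1}$). Let $P:=Circ(0,1,0,\dots,0)$ denote the basic circulant, the cyclic permutation matrix whose only nonzero entries are $P_{ij}=1$ when $i-j\equiv 1$.

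First I would prove by induction on $k$ that $(P^k)_{ij}=1$ precisely when $i-j\equiv k\pmod n$ and $0$ otherwise: the base case gives $P^0=I$, and the inductive step is a one-line entry computation $(P^{k+1})_{ij}=\sum_m (P^k)_{im}P_{mj}$, in which the only surviving term is $m\equiv j+1$. In particular $P^n=I$. From this description it follows at once, by comparing entries, that every circulant decomposes as
\[
Circ(c_0,\dots,c_{n-1})=\sum_{k=0}^{n-1} c_k\,P^k,
\]
so $Circ_n(R)$ is exactly the set of $R$-linear combinations of $I,P,\dots,P^{n-1}$.

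Next I would verify the subring axioms. Closure under addition and subtraction, together with containment of $0$ and $I=P^0$, is immediate from the representation above, since these operations act coefficientwise on the $c_k$. For closure under multiplication, write $C=\sum_k a_kP^k$ and $D=\sum_\ell b_\ell P^\ell$; expanding the product and using $P^n=I$ to reduce exponents modulo $n$ gives $CD=\sum_{m=0}^{n-1}\bigl(\sum_{k} a_k b_{m-k}\bigr)P^m$, again a circulant, its coefficients being the cyclic convolution of those of $C$ and $D$. This shows $Circ_n(R)$ is closed under multiplication and hence is a subring.

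Finally, for commutativity: since powers of $P$ commute with one another and, when $R$ is commutative, the scalar products satisfy $a_kb_\ell=b_\ell a_k$, the same expansion yields $CD=DC$; equivalently, the surjection $R[x]/(x^n-1)\to Circ_n(R)$, $x\mapsto P$, is a ring homomorphism out of a commutative ring, so its image is a commutative subring. I expect the only genuinely delicate step to be the index bookkeeping that establishes $(P^k)_{ij}=1\iff i-j\equiv k$ and the ensuing convolution formula; once that combinatorial fact is in place, every ring-theoretic claim is a formal consequence. I would also flag that commutativity of the resulting ring truly uses commutativity of $R$ — already visible for scalar circulants $aI,bI$ — which is the standing hypothesis in this setting.
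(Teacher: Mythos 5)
Your proof is correct, but it takes a different route from the paper: the paper states this theorem as a known fact without proof, and the only place it actually verifies anything close to it is inside the proof of the subsequent action theorem, where the product $AB$ of two circulants is computed entry by entry to extract the cyclic convolution formula $c_k=\sum_{i} a_i b_{k-i}$. You instead prove the single combinatorial lemma $(P^k)_{ij}=1\iff i-j\equiv k \pmod n$ for the shift matrix $P$, decompose $C=\sum_k c_k P^k$, and obtain closure and commutativity formally from the surjection $R[x]/(x^n-1)\to Circ_n(R)$; this buys a conceptual explanation (the convolution formula and $P^n=I$ fall out for free) at the price of one tacit step you should make explicit, namely that the scalar matrices $b_\ell I$ commute with $P^k$ because the entries of $P^k$ are $0$ and $1$, which are central in any ring. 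Two further points of comparison with the paper's setting: you rightly flag that commutativity of $Circ_n(R)$ genuinely requires $R$ commutative (the paper's statement, ``let $R$ be a ring,'' omits this hypothesis, and already $Circ_1(R)=R$ shows it is needed); and since the paper applies the result to $Circ_n(\mathbb{N})$, where $\mathbb{N}$ is only a commutative \emph{semiring}, your mention of closure under subtraction should be dropped in that application --- everything else in your argument, including the $P$-decomposition and the convolution computation, carries over verbatim to semirings, whereas a subring axiom involving additive inverses does not.
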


Thanks to this condition, we can define a new action 
\begin{theorem}
    Let $S$ be a semigroup and $T \subset S$ a set of conmmutative elements i.e.
    \begin{equation}
        ab = ba \forall a,b \in T
    \end{equation}
    Then
    \begin{align*}
        Circ_n(\mathbb{N})\times T^n &\longrightarrow T^n \\
        (C,(v_i)_{i=0}^{n-1})& \longmapsto (\prod v_j^{a_{j-i}})_{i=0}^{n-1}
    \end{align*}
    is an action of the multiplicative semigroup $Circ_n(\mathbb{N})$ over $T^n$. It will denoted as $Cv$
\end{theorem}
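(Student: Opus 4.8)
The plan is to verify the single axiom required by Problem~1, namely $(CC')\star v = C\star(C'\star v)$ for all $C,C'\in Circ_n(\mathbb{N})$ and $v=(v_i)_{i=0}^{n-1}\in T^n$, where $\star$ denotes the proposed map. Before that I would settle well-definedness. Writing $C=Circ(a_0,\dots,a_{n-1})$ with all index arithmetic modulo $n$, the $i$-th output coordinate $\prod_j v_j^{a_{j-i}}$ is a product of powers of the pairwise-commuting elements $v_0,\dots,v_{n-1}\in T$; since powers of commuting elements commute, this product is independent of the order of multiplication and hence well defined. Two small points I would record explicitly: the exponents $a_{j-i}\in\mathbb{N}$ may be $0$, so I would read $v^0$ as the identity and assume $S$ is a monoid (as it is in the intended application, where $S$ is a multiplicative semigroup of matrices); and since products of elements of $T$ need not lie in $T$, I would either assume $T$ is closed under multiplication or replace the codomain by the commutative subsemigroup $\langle T\rangle$ generated by $T$, so that the map lands where claimed. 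Note that the definition of a semigroup action in Problem~1 requires only the composition identity, so no separate identity axiom is needed (though one checks painlessly that the identity circulant acts as the identity map).

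Next I would carry out the main computation. Put $w=C'\star v$, so that $w_k=\prod_j v_j^{a'_{j-k}}$ where $C'=Circ(a'_0,\dots,a'_{n-1})$. Then
\[
\bigl(C\star w\bigr)_i=\prod_k w_k^{\,a_{k-i}}=\prod_k\Bigl(\prod_j v_j^{a'_{j-k}}\Bigr)^{a_{k-i}}=\prod_k\prod_j v_j^{\,a_{k-i}a'_{j-k}}.
\]
Because every factor is a power of some $v_j\in T$ and the elements of $T$ commute, I may freely interchange the two products and collect the exponent of each $v_j$, obtaining $\bigl(C\star w\bigr)_i=\prod_j v_j^{\,e_{ij}}$ with $e_{ij}=\sum_k a_{k-i}a'_{j-k}$. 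Substituting $l=k-i$ turns this into $e_{ij}=\sum_l a_l\,a'_{(j-i)-l}$, which is exactly the $(j-i)$-th cyclic-convolution coefficient of the defining vectors of $C$ and $C'$. By the structure theorem for circulant matrices recalled above, $CC'$ is again circulant and its defining vector is precisely this cyclic convolution; hence $e_{ij}$ equals the exponent that $(CC')\star v$ attaches to $v_j$ in coordinate $i$. Therefore $C\star(C'\star v)=(CC')\star v$, as required.

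Here the real role of the hypotheses becomes visible. The proposed action uses the transpose indexing $a_{j-i}$, so the exponent matrix attached to a circulant $X$ is $X^{\top}$; composing two actions multiplies these exponent matrices as $C^{\top}C'^{\top}=(C'C)^{\top}$, whereas the single action by $CC'$ has exponent matrix $(CC')^{\top}$. These agree exactly when $CC'=C'C$, which is the commutativity of $Circ_n(\mathbb{N})$ guaranteed by the structure theorem; equivalently, cyclic convolution is commutative. I therefore expect the main obstacle to be essentially bookkeeping --- keeping the modular indices and the nested products straight and justifying the interchange of products via the commutativity of $T$ --- rather than anything conceptually deep: once the transpose/convolution identification is made, the commutativity of circulant matrices closes the argument.
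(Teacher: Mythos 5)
Your proof is correct and follows essentially the same route as the paper's: expand $C\star(C'\star v)$, interchange the nested products using the commutativity of $T$, collect the exponent of each $v_j$, and identify the resulting sum $\sum_k a_{k-i}a'_{j-k}$ with the cyclic-convolution coefficient defining $CC'$, exactly as the paper does with $c_k=\sum_i a_i b_{k-i}$. Your extra care about well-definedness (reading $v^0$ as an identity and closing $T$ under products, points the paper glosses over) and the closing transpose/commutativity remark are sound refinements of, not departures from, the paper's argument.
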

\begin{proof}
    It is clear that if $C\in Circ_n(\mathbb{N}), v \in T^n$ then $Cv \in T^n$, as the elements of $Cv$ are product of elements that commute among each other. We have to prove $A(Bv)=(AB)v$ for all $A,B\in Circ_n(\mathbb{N}), v\in T^n$. 
    \[
    A=\begin{pmatrix}
a_0 & a_{n-1} & a_{n-2} & \cdots & a_1 \\
a_1 & a_0 & a_{n-1} & \cdots & a_2 \\
a_2 & a_1 & a_0 & \cdots & a_3 \\
\vdots & \vdots & \vdots & \ddots & \vdots \\
a_{n-1} & a_{n-2} & a_{n-3} & \cdots & a_0
\end{pmatrix}
    \]
       \[
    b=\begin{pmatrix}
b_0 & b_{n-1} & b_{n-2} & \cdots & b_1 \\
b_1 & b_0 & b_{n-1} & \cdots & b_2 \\
b_2 & b_1 & b_0 & \cdots & b_3 \\
\vdots & \vdots & \vdots & \ddots & \vdots \\
b_{n-1} & b_{n-2} & b_{n-3} & \cdots & b_0
\end{pmatrix}
    \]
    then
    \[
    AB=\begin{pmatrix}
\sum_{i=0}^{n-1} a_ib_{n-i} &  \sum_{i=0}^{n-1} a_ib_{n-1-i} & \sum_{i=0}^{n-1} a_ib_{n-2-i} & \cdots & \sum_{i=0}^{n-1} a_ib_{n+1-i} \\
\sum_{i=0}^{n-1} a_ib_{n+1-i} & \sum_{i=0}^{n-1} a_ib_{n-i} & \sum_{i=0}^{n-1} a_ib_{n-1-i} & \cdots & \sum_{i=0}^{n-1} a_ib_{n+2-i} \\
\sum_{i=0}^{n-1} a_ib_{n+2-i} & \sum_{i=0}^{n-1} a_ib_{n+1-i} & \sum_{i=0}^{n-1} a_ib_{n-i} & \cdots & \sum_{i=0}^{n-1} a_ib_{n+3-i} \\
\vdots & \vdots & \vdots & \ddots & \vdots \\
\sum_{i=0}^{n-1} a_ib_{n-1-i} & \sum_{i=0}^{n-1} a_ib_{n-2-i} & \sum_{i=0}^{n-1} a_ib_{n-3-i} & \cdots & \sum_{i=0}^{n-1} a_ib_{n-i}
\end{pmatrix}
    \]
    where all subindex are taking mod $n$. De donde, $AB=Cir(c_0,\cdots, c_{n-1})$ con $c_k = \sum_{i=0}^{n-1} a_i b_{k-i}$. Now, we have that
    \begin{align*}
        A(Bv)= & A\left(\prod_{j=0}^{n-1} v_j ^{b_{-i+j}}\right)_{i=0}^{n-1} = \left( \prod _{i=0}^{n-1}\left(\prod_{j=0}^{n-1} v_j ^{b_{-i+j}}\right)^{a_{i-k}}\right)_{k=0}^{n-1} = \left( \prod _{i=0}^{n-1}\prod_{j=0}^{n-1} v_j ^{b_{-i+j}a_{i-k}}\right)_{k=0}^{n-1} \\ &= \left( \prod_{j=0}^{n-1} \prod _{i=0}^{n-1}v_j ^{b_{-i+j}a_{i-k}}\right)_{k=0}^{n-1} = \left( \prod_{j=0}^{n-1} v_j ^{\sum_{i=0}^{n-1}b_{-i+j}a_{i-k}}\right)_{k=0}^{n-1} = \\ 
        &= \left( \prod_{j=0}^{n-1} v_j ^{\sum_{i=0}^{n-1}a_{i} b_{j-k-i}}\right)_{k=0}^{n-1} = \left( \prod_{j=0}^{n-1} v_j ^{c_{k-j}}\right)_{k=0}^{n-1}= (AB)v
    \end{align*}
\end{proof}
The previous action is a particular instance of \cite{maze}, where we consider the $\mathbb{N}-module$ $M$ generated by $T$ with $\mathbb{N}-$action $n\cdot t = t^n$ for all $n\in \mathbb{N}$, $t\in M$. We will use as $T$ a commutative subset of matrix over semirings.

\begin{definition}
A set $R$ with two internal operations $+$ and $\cdot$ is called a semiring if both operations are associative and verify that $a\cdot (b+c)=a\cdot b+a\cdot c$ and $(b+c)\cdot a=b\cdot a+c\cdot a$ for every $a,b,c\in R$. We say that $S$ is additively (resp. multiplicatively) commutative in case addition (resp. multiplication) satisfies commutativity. We say that $R$ is commutative in case both operations satisfy commutativity. 
\end{definition}

The study of semiring for cryptographyc purpouse was starded by \cite{maze}, which focus on concruence simple semiring for cryptographyc applications 

\begin{definition} A congruence relation on a semiring $R$ is an equivalence relation $\sim$ such that

$$a\sim b \Rightarrow \left\{ \begin{array}{ccc} 
a+c & \sim & b+c \\
c+a & \sim & c+b \\
a\cdot c & \sim & b\cdot c \\
c\cdot a & \sim & c\cdot b \\
\end{array}\right.$$

\noindent for every $a,b,c\in R$. A semiring $R$ that admits no congruence relations other than the trivial
ones, $id_R$ and $R \times R$, is said to be congruence-simple.
\end{definition}

The following result, first appears in \cite{monico}, establishes a classification of congruence simple semiring

\begin{theorem} (\cite[Theorem 4.1]{monico}\label{necessary}
Let $R$ be a finite, additively commutative, congruence-simple semiring. Then one of the following holds:
\begin{enumerate}
\item $\vert R\vert \leq 2$.
\item $R\cong \rm{Mat}_n(\mathbb{F}_q)$, for some finite field $\mathbb{F}_q$ and some $n \geq 1$.
\item $R$ is a zero multiplication ring of prime order.
\item $R$ is additively idempotent, i.e., $x+x=x$ for every $x\in R$.
\item $R$ contains an absorbing element $\infty$, i.e. $x\cdot \infty=\infty \cdot x=\infty$ for every $x\in R$ and $R+R=\{ \infty \}$. 
\end{enumerate}
\end{theorem}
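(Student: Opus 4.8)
The plan is to translate the congruence-simplicity hypothesis into rigid constraints on the additive semigroup $(R,+)$, which is a finite commutative semigroup, and then to read off the five cases from its structure. The guiding principle is that any equivalence relation built from the additive operation that also happens to respect multiplication is automatically a semiring congruence, so simplicity forces it to be either $\mathrm{id}_R$ or $R\times R$.

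First I would record the elementary facts I need about $(R,+)$: being finite and commutative it has idempotents and a unique minimal ideal (its kernel) that is a finite abelian group. The decisive construction is the \emph{cancellative congruence} $\theta$ defined by $a\,\theta\,b$ iff $a+c=b+c$ for some $c\in R$. A short check shows $\theta$ is reflexive, symmetric and transitive, compatible with $+$ (adding $x$ on both sides), and compatible with $\cdot$ (multiplying the witness $c$ by $x$, using distributivity); hence $\theta$ is a semiring congruence. By simplicity either $\theta=\mathrm{id}_R$, which says $(R,+)$ is cancellative, or $\theta=R\times R$, which says any two elements of $R$ become equal after adding a common element.

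In the cancellative case, a finite commutative cancellative semigroup is an abelian group, so $R$ is a (not necessarily unital) finite ring. For a ring the semiring congruences are exactly the two-sided ideals, so $R$ is congruence-simple iff it is a simple ring. If $R^2=0$ every additive subgroup is an ideal, so $R$ has no proper subgroups and is a zero-multiplication ring of prime order (case 3), except for the trivial sizes collected in case 1. If $R^2\neq0$, finiteness makes $R$ Artinian with vanishing Jacobson radical, and Artin--Wedderburn together with Wedderburn's theorem on finite division rings yields $R\cong\mathrm{Mat}_n(\mathbb{F}_q)$ (case 2).

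In the non-cancellative case I would aim to land in case 4 or case 5. If addition is idempotent we are immediately in case 4, so I would assume some $a$ satisfies $a+a\neq a$ and exploit the total collapse $\theta=R\times R$ together with the group structure of the kernel to force an absorbing element: one builds the relation identifying each element with its additive ``limit'' (sufficiently large additive multiples, which stabilise by finiteness) and shows that simplicity compresses all sums into a single element $\infty$ with $R+R=\{\infty\}$ and $x\infty=\infty x=\infty$, i.e. case 5. The main obstacle is exactly this last case: the interplay between the Rees--Suschkewitsch/kernel structure of a non-idempotent finite commutative semigroup and the multiplicative compatibility needed to invoke simplicity is delicate, and the recurring technical burden throughout the argument is verifying that each relation I write down is genuinely a semiring congruence rather than merely an additive one.
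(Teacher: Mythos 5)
First, note that the paper does not prove this theorem at all: it is quoted verbatim as \cite[Theorem 4.1]{monico}, so your attempt has to be measured against Monico's original argument rather than anything in this paper. Your opening moves do in fact reproduce Monico's strategy: the relation $\theta$ ($a\,\theta\,b$ iff $a+c=b+c$ for some $c$) is a semiring congruence, and simplicity gives the dichotomy cancellative versus total collapse; in the cancellative branch, finite + cancellative + commutative makes $(R,+)$ a group, semiring congruences on a ring are exactly ideals, and the split $R^2=0$ (prime-order zero ring) versus $R^2\neq 0$ (Jacobson radical zero, Artin--Wedderburn, Wedderburn's little theorem, giving $\mathrm{Mat}_n(\mathbb{F}_q)$) is correct and essentially complete, modulo the small check that $J(R)=R$ would force $R$ nilpotent and hence $R^2=0$.

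The genuine gap is exactly where you flag it: the non-cancellative branch is not a proof but a declaration of intent. Nothing in $\theta=R\times R$ alone separates case 4 from case 5; you need to exhibit further explicit congruences and verify their multiplicative compatibility, which is the entire technical content of Monico's Section 4. Concretely, what is missing: (i) take $N$ with $Nx$ additively idempotent for all $x$ and check that $a\,\sigma\,b \Leftrightarrow Na=Nb$ is a semiring congruence (this uses $N(ra)=r(Na)$ and $N(a+c)=Na+Nc$); since $x\,\sigma\,Nx$ always, $\sigma=\mathrm{id}$ yields case 4, while $\sigma=R\times R$ yields a single additive idempotent $\infty$ with $r\infty=\infty r=\infty$ --- but at this point you still know neither $x+\infty=\infty$ nor $R+R=\{\infty\}$; (ii) a second congruence $a\,\tau\,b \Leftrightarrow a+\infty=b+\infty$ (compatibility with $\cdot$ needs the absorption $r\infty=\infty$, or in Monico's version the kernel idempotent $e$ together with the non-obvious verification that $re$ is idempotent and $re+e=e$), where $\tau=\mathrm{id}$ forces $\infty$ to be an additive identity, hence every element additively invertible, hence $(R,+)$ a group --- contradicting the collapse branch --- so $x+\infty=\infty$ for all $x$; and (iii) a final, separate argument collapsing \emph{all} sums, e.g.\ showing that $Z^{*}=\{x : uxv+w=\infty \text{ for all admissible } u,v,w\}$-type sets give a congruence with a unique nontrivial class, and ruling out the degenerate alternative. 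Your sketch ("one builds the relation \dots and shows that simplicity compresses all sums into a single element") names the right objects (additive limits, the kernel group) but supplies neither the relations, nor the compatibility checks, nor the ruling-out of mixed structures in which some elements are additively idempotent and others are not; as written, the 4/5 dichotomy and in particular $R+R=\{\infty\}$ remain unproven.
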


To obtain a commutative set, we will use the following result from \cite{monico}
\begin{definition}
Let \( R \) be a semiring with center 
\[
C_R = \{ r \in R \mid r s = s r \text{ for all } s \in R \},
\]
and let \( A \in \text{Mat}_{n \times n}(R) \) be a matrix. Define \( C_R[A] \) to be the set of polynomials in \( A \) with coefficients in \( C_R \).
\end{definition} 
\begin{lemma}
    Let \( R \), \( C_R \), and \( A \) be as above. The set \( C_R[A] \) is a commutative subsemiring of the matrix semiring \( \text{Mat}_{n \times n}(R) \).
\end{lemma}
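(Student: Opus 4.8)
The plan is to verify the three defining properties in turn: that $C_R[A]$ is closed under matrix addition, closed under matrix multiplication, and that multiplication restricted to $C_R[A]$ is commutative. Throughout, a generic element of $C_R[A]$ is a finite sum $p(A)=\sum_{i} c_i A^i$ with $c_i\in C_R$, and the entire argument rests on one elementary entrywise observation together with the fact that the center $C_R$ is itself a commutative subsemiring of $R$.

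First I would establish that auxiliary fact about the center. If $c,d\in C_R$, then for every $s\in R$ we have $(c+d)s=cs+ds=sc+sd=s(c+d)$ and $(cd)s=c(ds)=c(sd)=(cs)d=(sc)d=s(cd)$, so $c+d$ and $cd$ again lie in $C_R$; moreover $cd=dc$ holds directly from the definition of the center, since $d$ commutes with everything, $c$ included. Thus $C_R$ is a commutative subsemiring, which legitimizes all the coefficient manipulations below.

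The second ingredient is that, inside a matrix product, a central coefficient may be pulled through a power of $A$. Concretely, the $(k,l)$ entry of $(c_iA^i)(d_jA^j)$ is $\sum_m c_i (A^i)_{km} d_j (A^j)_{ml}$; since $d_j$ is central it commutes with each $(A^i)_{km}$, and extracting the now-central scalar $c_id_j$ leaves $c_id_j\sum_m (A^i)_{km}(A^j)_{ml}=c_id_j(A^{i+j})_{kl}$, whence $(c_iA^i)(d_jA^j)=(c_id_j)A^{i+j}$. Closure under addition is then immediate, as $p(A)+q(A)=\sum_k (c_k+d_k)A^k$ with $c_k+d_k\in C_R$. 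For the product, distributing and summing the identity just established over $i$ and $j$ gives
\[
p(A)\,q(A)=\sum_{i,j} c_i d_j\, A^{i+j}=\sum_k\Big(\sum_{i+j=k} c_i d_j\Big)A^k,
\]
whose coefficients lie in $C_R$ by the first step, so $C_R[A]$ is closed under multiplication. Commutativity finally follows by comparing this with the analogous expression for $q(A)\,p(A)$ and invoking $c_id_j=d_jc_i$ termwise.

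I expect the only genuinely delicate point to be the bookkeeping around the constant term: if $R$ lacks a multiplicative identity, then $A^0=I$ need not belong to $\mathrm{Mat}_n(R)$, so one must fix whether the polynomials range over $i\ge 0$ or $i\ge 1$. This choice leaves every displayed computation untouched—the closure and commutativity arguments are verbatim identical in either convention—so the substance of the proof is the two observations above, and the rest is routine reindexing.
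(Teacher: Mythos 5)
Your proof is correct, but there is nothing in the paper to compare it against: the paper states this lemma without proof, importing it from \cite{monico}. Your write-up supplies exactly the verification that is standard for this fact, and it is organized sensibly: first checking that $C_R$ is itself a commutative subsemiring (needed to know the collected coefficients $\sum_{i+j=k}c_id_j$ stay central), then the single computational kernel $(c_iA^i)(d_jA^j)=(c_id_j)A^{i+j}$, from which closure under both operations and commutativity all follow by distribution and termwise comparison. The entrywise pull-through argument is sound in a semiring: centrality of $d_j$ moves it past $(A^i)_{km}$, and left distributivity extracts the scalar $c_id_j$ from the sum over $m$. Two small remarks. First, collecting the double sum $\sum_{i,j}$ into $\sum_k\sum_{i+j=k}$ silently uses additive commutativity of $R$; this is harmless here because the paper works throughout with additively commutative semirings (as in its Theorem on the Monico classification), but it is worth saying since the paper's bare definition of a semiring does not require it. Second, your caution about the constant term is well placed: in a semiring lacking $0$ or $1$ the identity matrix need not exist in $\mathrm{Mat}_n(R)$, so one should either restrict to $i\ge 1$ or assume $0,1\in R$ as the paper's subsequent lemma on congruences of $\mathrm{Mat}_n(R)$ explicitly does; as you note, nothing else in the argument changes.
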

As a result, we will use the multiplicative semigroup of matrix over a semiring, and as a commutative subset we will use elements of \( C_R[A] \), with $A\in Mat_n(R)$ with $n\in \mathbb{N}$. In addition, in \cite{monico} it is also proven that

\begin{lemma}
    Let \( R \) be an additively commutative semiring with 1 and 0, and let \( \sim \) be a congruence relation on \( \text{Mat}_n(R) \). Then there exists a congruence relation \( \sim_0 \) on \( R \) such that
\[
A \sim B \in \text{Mat}_n(R) \quad \Leftrightarrow \quad a_{ij} \sim_0 b_{ij}, \quad \forall 0 \leq i, j \leq n.
\]
\end{lemma}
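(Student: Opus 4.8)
The plan is to use matrix units to transfer the congruence between $\mathrm{Mat}_n(R)$ and $R$. Since $R$ has $0$ and $1$, for each pair of indices we have the matrix unit $E_{ij}\in\mathrm{Mat}_n(R)$ with $1$ in position $(i,j)$ and $0$ elsewhere, and for $r\in R$ we write $rE_{ij}$ for the matrix with $r$ in that position and $0$ elsewhere. These satisfy $(rE_{ij})(sE_{kl})=0$ when $j\neq k$ and $(rE_{ij})(sE_{jl})=(rs)E_{il}$, and every $A=(a_{ij})$ decomposes as $A=\sum_{i,j}a_{ij}E_{ij}$. First I would \emph{define} the candidate congruence on $R$ by declaring $a\sim_0 b$ if and only if $aE_{11}\sim bE_{11}$.

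Next I would check that $\sim_0$ is indeed a congruence on $R$. Reflexivity, symmetry and transitivity are inherited directly from $\sim$. Compatibility with addition follows from $(a+c)E_{11}=aE_{11}+cE_{11}$ together with the additive compatibility of $\sim$; compatibility with multiplication uses the identities $(aE_{11})(cE_{11})=(ac)E_{11}$ and $(cE_{11})(aE_{11})=(ca)E_{11}$, so that multiplying the relation $aE_{11}\sim bE_{11}$ on either side by $cE_{11}$ yields $ac\sim_0 bc$ and $ca\sim_0 cb$.

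For the forward implication I would extract entries: a direct computation with the product rules gives the identity $E_{1i}\,A\,E_{j1}=a_{ij}E_{11}$. Hence if $A\sim B$, applying compatibility of $\sim$ with left and right multiplication by $E_{1i}$ and $E_{j1}$ yields $a_{ij}E_{11}\sim b_{ij}E_{11}$, i.e. $a_{ij}\sim_0 b_{ij}$ for every $i,j$. For the converse I would reassemble: the analogous identity $E_{i1}\,(a_{ij}E_{11})\,E_{1j}=a_{ij}E_{ij}$ shows that $a_{ij}\sim_0 b_{ij}$ implies $a_{ij}E_{ij}\sim b_{ij}E_{ij}$ for each fixed pair $(i,j)$. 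Summing these $n^2$ relations over all $i,j$ and using $A=\sum_{i,j}a_{ij}E_{ij}$ and $B=\sum_{i,j}b_{ij}E_{ij}$ recovers $A\sim B$.

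The one point that needs care — and the main obstacle — is the final summation step, because the stated congruence axioms only give the \emph{one-sided} replacement $x\sim y\Rightarrow x+c\sim y+c$. To add two congruent pairs $x\sim y$ and $u\sim v$ I would chain $x+u\sim y+u$ with $y+u=u+y\sim v+y=y+v$ through transitivity, which is exactly where the hypothesis that $R$ (hence $\mathrm{Mat}_n(R)$) is \emph{additively commutative} is essential; iterating this lets me add up the $n^2$ single-entry congruences safely. The remaining computations with matrix units are routine index bookkeeping.
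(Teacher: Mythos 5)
Your proof is correct, and since the paper itself states this lemma without proof (it is imported from \cite{monico}), there is nothing in the paper to diverge from: your matrix-unit argument — defining $a\sim_0 b$ iff $aE_{11}\sim bE_{11}$, extracting entries via $E_{1i}AE_{j1}=a_{ij}E_{11}$, and reassembling via $E_{i1}(a_{ij}E_{11})E_{1j}=a_{ij}E_{ij}$ followed by summation — is exactly the standard proof given in that source. Two minor remarks: your computations with the $E_{ij}$ silently use that $0$ is both additively neutral and multiplicatively absorbing (the intended meaning of ``with $1$ and $0$''), and the final chaining step does not actually need additive commutativity, since the paper's definition of congruence already provides both one-sided implications $x\sim y\Rightarrow x+c\sim y+c$ and $x\sim y\Rightarrow c+x\sim c+y$, so $x+u\sim y+u\sim y+v$ directly.
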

So we can ensure that the new semiring will be also simple. Under this circunstances, we can introduce the following key exchange protocol

\begin{protoc}
    Alice and Bob agree on a simple semiring $R$, a matrix $M \in \rm{Mat}_n(R)$ with large order, and $v=(M_i)_{i=0}^{n-1} \in C_R[M]^n$ and make them public. 
    \begin{enumerate}
        \item Alice chose $A\in Circ_n(\mathbb{N)}$ and makes public $pk_1=Av$
        \item Bob chose $B\in Circ_n(\mathbb{N)}$ and makes public $pk_2=Bv$
        \item Alice computes $Apk_2$ and Bob computes $Bpk_1$
        \end{enumerate}
        The common key is 
        \[
        Apk_2 = A(Bv) = (AB)v = (BA)v = B(Av) = Bpk_1
        \]
\end{protoc}
For computational purposes, Alice and Bob can take matrix $A,B$ with $a_{i},b_{i}\leq m$ been $m\in \mathbb{N}$ an upper bound computationally assumable by both parties. 

The security of the protocol relies on the following problem
\begin{problem}
    Let $R$ a simple semiring and $M\in \rm{Mat}_n(R)$. Given $v, Av,Bv \in C_R[M]^n$ with $A,B \in Circ_n(\mathbb{N})$, compute $ABv$.
\end{problem}

An easier problem that implies solve the previous one is 
\begin{problem}
    Let $R$ a simple semiring and $M\in \rm{Mat}_n(R)$. Given $v, Av \in C_R[M]^n$ with $A \in Circ_n(\mathbb{N})$, find $C\in Circ_n(\mathbb{N})$ such that $Av=Cv$.
\end{problem}

To avoid a brute force attack, we need to ensure that the set $C[A]$ is large enought. For this, we need the following definition

\begin{definition}
Let $a = \{a_k\}_{k \in \mathbb{N}}$ be a sequence in a finite set such that 
\[
a_n = a_m \Rightarrow a_{n+1} = a_{m+1}.
\]
The \emph{order} $\operatorname{ord}(a)$ of $a$ is the least positive integer $m$ for which there exists $k < m$ with $a_k = a_m$. The \emph{preperiod} $\operatorname{pr}(a)$ of $a$ is the largest non-negative integer $m$ such that for all $k > m$ we have $a_k \ne a_m$. The \emph{period} $\operatorname{per}(a)$ of $a$ is the least positive integer $m$ for which there exists an integer $N$ such that 
\[
a_{m+k} = a_k \quad \text{for all } k > N.
\]

If $g$ is an element of a semigroup, then we set 

\[
\operatorname{ord}(g) = \operatorname{ord}(\{g^n\}_{n \in \mathbb{N}}), \quad 
\operatorname{per}(g) = \operatorname{per}(\{g^n\}_{n \in \mathbb{N}}), \quad 
\operatorname{pr}(g) = \operatorname{pr}(\{g^n\}_{n \in \mathbb{N}}).
\]
\end{definition}

Now, to give some bounds, we need the landau function.

\[
g(n) = \max\left\{ \text{ord}(\sigma) \mid \sigma \in S_n \right\}
= \max\left\{ \text{lcm} \{a_1, a_2, \dots, a_m \} \mid a_i > 0, a_1 + \dots + a_m = n \right\}.
\]
Which, in \cite{massias1985majoration} it is proven that

\[
n \ln(n) \leq \ln(g(n)) \leq \sqrt{n} \ln(n) \left( 1 + \frac{\ln \ln(n)}{2 \ln(n)} \right)
\]

finally, in \cite{maze} it is established that

\begin{theorem} \label{LargeOrderMatrix}
    Let \( n \in \mathbb{N} \) and \( R \) be a semiring with 0 and 1, and center \( C \). Then there is an \( n \times n \) matrix \( M \) with entries in \( R \) such that the order of \( M \) is larger than \( g(n) \). In particular, the size of \( C[M] \) is larger than \( g(n) \) as well.
\end{theorem}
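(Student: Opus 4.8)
The plan is to exhibit the required matrix as a permutation matrix, using only the constants $0$ and $1$ of $R$ so that the argument never relies on the additive structure of the semiring (which, by Theorem \ref{necessary}, may differ wildly from that of a ring). First I would invoke the definition of the Landau function to fix a permutation $\sigma \in S_n$ with $\operatorname{ord}(\sigma) = g(n)$, and take $M = P_\sigma$, the matrix over $R$ with $(P_\sigma)_{ij} = 1$ when $i = \sigma(j)$ and $(P_\sigma)_{ij} = 0$ otherwise.

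The key step is the identity $M^k = P_{\sigma^k}$ for all $k \ge 1$, which I would obtain by induction from $P_\sigma P_\tau = P_{\sigma\tau}$. The heart of the matter lies in this last equality: in $(P_\sigma P_\tau)_{ik} = \sum_{j=0}^{n-1}(P_\sigma)_{ij}(P_\tau)_{jk}$ the factor $(P_\tau)_{jk}$ is nonzero only for the single index $j = \tau(k)$, so the sum reduces to the one term $1\cdot 1 = 1$, and it equals $1$ exactly when $i = \sigma(\tau(k))$. The essential point I want to stress is that two nonzero summands are never added, so the semiring addition is never genuinely invoked; this is what makes the computation valid over an arbitrary semiring possessing $0$ and $1$. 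I expect this to be the main (if brief) obstacle, precisely because one must resist any appeal to ring-like behaviour of sums of units.

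Granting $M^k = P_{\sigma^k}$, distinctness follows at once: the powers $\sigma, \sigma^2, \dots, \sigma^{g(n)}$ are the $g(n)$ distinct elements of the cyclic group $\langle\sigma\rangle$, and the assignment $\pi \mapsto P_\pi$ is injective, so $M, M^2, \dots, M^{g(n)}$ are pairwise distinct with $M^{g(n)} = P_{\mathrm{id}} = I$. Reading the order off the sequence $\{M^k\}_{k\ge 1}$, the first recurrence is $M^{g(n)+1} = M$, and no smaller index can repeat because $M^m = M^k$ forces $g(n)\mid m-k$; hence $\operatorname{ord}(M) = g(n)+1 > g(n)$.

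Finally, for the statement about $C[M]$ I would observe that the $g(n)$ distinct powers $M, M^2, \dots, M^{g(n)}$ all lie in $C[M]$, being the monomials $1\cdot M^k$ with the unit coefficient $1 \in C$, and that the zero matrix also belongs to $C[M]$ as the value of the constant polynomial $0$ (using $0 \in C$). Since each $M^k = P_{\sigma^k}$ is a permutation matrix, and therefore never the zero matrix, these are $g(n)+1$ distinct elements of $C[M]$, giving $|C[M]| \ge g(n)+1 > g(n)$, as claimed.
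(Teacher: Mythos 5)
Your proof is correct and follows essentially the same route as the source: the paper itself imports this theorem from \cite{maze} without proof, and the construction it later describes (the block-diagonal matrix with cyclic permutation blocks $T_{a_1},\dots,T_{a_r}$ and an identity block, built from a partition maximizing the lcm) is exactly your $P_\sigma$ for a Landau-optimal $\sigma$. Your observation that the matrix product never adds two nonzero entries, so only the axioms for $0$ and $1$ are used, together with the off-by-one bookkeeping (ord$(M)=g(n)+1$ under the paper's sequence definition of order, and the zero polynomial giving the extra element of $C[M]$), correctly fills in the details the paper leaves implicit.
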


In order to obtain suitable matrix, we will use the following result

\begin{lemma}
    Let $P, A \in {Mat}_{n\times n}(R)$ with $P$ inversible. Then $ord(A) = ord(PAP^{-1})$.
\end{lemma}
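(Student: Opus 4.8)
The plan is to exploit the fact that conjugation by an invertible matrix is an automorphism of the multiplicative semigroup $\text{Mat}_n(R)$, and therefore preserves every multiplicative relation; in particular it preserves the pattern of coincidences among the powers of an element, which is exactly what the order records.

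First I would establish the identity $(PAP^{-1})^k = PA^kP^{-1}$ for every $k \geq 0$ by induction on $k$. The base case $k=0$ reads $I = PIP^{-1}$, which is immediate, and the inductive step is
\[
(PAP^{-1})^{k+1} = (PA^kP^{-1})(PAP^{-1}) = PA^k(P^{-1}P)AP^{-1} = PA^{k+1}P^{-1},
\]
where the cancellation $P^{-1}P = I$ is precisely where the invertibility of $P$ is used.

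Second, I would translate the defining collision condition. By the definition of order, $\operatorname{ord}(A)$ is the least positive integer $m$ for which there exists $k < m$ with $A^k = A^m$, and likewise for $PAP^{-1}$. Using the identity above, for any indices $k,m$ we have $(PAP^{-1})^k = (PAP^{-1})^m$ if and only if $PA^kP^{-1} = PA^mP^{-1}$; multiplying on the left by $P^{-1}$ and on the right by $P$—again legitimate because $P$ is invertible—this is equivalent to $A^k = A^m$. Hence the two sequences $\{A^n\}_{n\in\mathbb{N}}$ and $\{(PAP^{-1})^n\}_{n\in\mathbb{N}}$ have exactly the same set of coinciding index pairs $(k,m)$.

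Finally, since the least $m$ admitting some $k<m$ with $a_k = a_m$ is determined solely by this set of index pairs, and that set is identical for both sequences, the minimal such $m$ coincides, giving $\operatorname{ord}(A) = \operatorname{ord}(PAP^{-1})$. I do not expect a genuine obstacle here; the only point requiring a word of care is checking that $\{(PAP^{-1})^n\}$ qualifies as an order sequence in the sense of the definition, i.e. that $a_n = a_m \Rightarrow a_{n+1} = a_{m+1}$, which holds automatically for the powers of a semigroup element and so needs only a one-line remark.
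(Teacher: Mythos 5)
Your proposal is correct and follows essentially the same route as the paper: both establish $(PAP^{-1})^k = PA^kP^{-1}$ (the paper by a one-step computation, you by explicit induction) and then cancel $P$ and $P^{-1}$ to show the power sequences of $A$ and $PAP^{-1}$ collide at exactly the same index pairs, so their orders agree. Your additional remark that the conjugated power sequence satisfies the defining condition of an order sequence is a harmless refinement the paper leaves implicit.
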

\begin{proof}
    We have that $PAP^{-1} \cdot PAP^{-1} = PA^2P^{-1}$. Hence, $(PAP^{-1})^n = PA^nP^{-1}$. Moreover, 
    \[
    (PAP^{-1})^n = (PAP^{-1})^k \Longleftrightarrow PA^nP^{-1} = PA^kP^{-1} \Longleftrightarrow A^n = A^k.
    \]
    In the last equivalence, we multiplied both sides by $P$ and by $P^{-1}$.
\end{proof}

To proceed, we need to understand the invertible matrices over our semiring \( R \). To that end, we first introduce the notions of the sum and the direct sum of semigroups.

\begin{definition}
    Let $(R,+)$ be a finite semigroup, and let $A,B \leq R$ sub semigroups. The sum of semigroups $A,B$ is defined as

    \[A+B := \{a+b : a \in A, b \in B \}\]
\end{definition}

\begin{definition} 
Let $(R,+)$ be a semigroup, and let $A, B \leq R$ be two subsemigroups such that $A \cap B = \emptyset$ and for all $c \in A + B$, there exist unique elements $a \in A$ and $b \in B$ such that $a + b = c$. Then the sum $A + B$ is said to be a direct sum, and it is denoted by $A \oplus B$. 
\end{definition}

Due to direct sum, we can define the concept of irreducible and reducible semigroup

\begin{definition}
Let $(R,+)$ be a finite semigroup. We say that $R$ is reducible if there exist proper subsemigroups $A, B \lneq R$ such that $R = A \oplus B$. Otherwise, we say that $R$ is irreducible. 
\end{definition}

Now we will present a sufficient condition for irreducibility of a semigroup wiht neutral element. For this, we will use the following easy proof fact

\begin{lemma}
    Let $(R,+)$ a finite reducible semigroup with cardinal $n$, with $R=A\oplus B$. Then $1<\lvert A\rvert , \lvert B\rvert < n$.
\end{lemma}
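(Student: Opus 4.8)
The plan is to handle the two bounds separately, since the upper and lower inequalities draw on different parts of the hypothesis. The upper bounds are immediate: by the definition of a reducible semigroup the factors $A$ and $B$ are \emph{proper} subsemigroups, so $A \lneq R$ and $B \lneq R$, whence $|A| < n$ and $|B| < n$. No real work is required for this half; it is just unpacking what ``reducible'' means.

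For the lower bounds I would argue by contradiction, exhibiting a bijection that forces a factor to equal all of $R$ as soon as it is a singleton. First observe that each factor is nonempty, for if $A = \emptyset$ then $A + B = \emptyset = R$, which is impossible since $R$ is nonempty; hence $|A|, |B| \geq 1$. Now suppose $|A| = 1$, say $A = \{a\}$, and consider the map $\varphi \colon B \to R$ given by $\varphi(b) = a + b$. Because $R = A \oplus B = A + B$ and $A$ contains the single element $a$, every element of $R$ has the form $a + b$ for some $b \in B$, so $\varphi$ is surjective. The key step is injectivity: if $a + b = a + b'$, then $(a,b)$ and $(a,b')$ are two decompositions of the same element of the direct sum, so the uniqueness clause in the definition of $A \oplus B$ forces $b = b'$. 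Thus $\varphi$ is a bijection and $|B| = |R| = n$, contradicting the upper bound $|B| < n$ established above. Therefore $|A| > 1$.

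The case $|B| > 1$ is handled identically, using the map $\psi \colon A \to R$, $\psi(a) = a + b_0$, when $B = \{b_0\}$; since the set sum is written throughout in the fixed order $a + b$, the argument is genuinely symmetric and needs no commutativity assumption. I expect no serious obstacle here: the entire content is the bijection $\varphi$, and the only point demanding a little care is recognizing that its injectivity is precisely the uniqueness-of-representation property of the direct sum. It is worth noting that neither the disjointness condition $A \cap B = \emptyset$ nor the subsemigroup axioms are actually invoked in the counting, so the lemma is indeed \emph{easy} as advertised, and the remaining effort is purely in stating the bijection cleanly.
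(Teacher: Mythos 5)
Your proof is correct. There is nothing in the paper to compare it against: the lemma is stated there without proof, introduced only as an ``easy proof fact,'' so your argument actually supplies the missing justification. The two halves are handled exactly as one would want. The upper bound $\lvert A\rvert,\lvert B\rvert<n$ is indeed immediate from properness, and your reading of the hypothesis is the right one, since in the paper's definition of reducibility (and in its only use of the lemma, inside the irreducibility theorem) $A$ and $B$ are the proper subsemigroups witnessing $R=A\oplus B$. The lower bound via the map $\varphi\colon B\to R$, $\varphi(b)=a+b$, is the natural argument: surjectivity is $R=A+B$ with $A=\{a\}$, and injectivity is precisely the uniqueness-of-representation clause in the paper's definition of direct sum, as you note. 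Your closing remark is also accurate as far as it goes; one can add that even without invoking properness the upper bound would follow from your own observations, since $A$ and $B$ are disjoint, nonempty subsets of $R$, giving $\lvert A\rvert\leq n-\lvert B\rvert<n$, so the lemma is robust to either reading of the hypothesis.
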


The following theorem is a new result with a suficient condition of irreducibility of a finite semigroup 

\begin{theorem}
    Let $(R,+)$ be a finite semigroup. If there exists an element $x \in R$ such that

    \[z+y=x \Longrightarrow z=x  \textup{ ó }. y=x\]
    \[x+y=y+x=x  \textup{  } \forall y \in R\]
    
Then $R$ is irreducible.
\end{theorem}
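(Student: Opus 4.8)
The plan is to argue by contradiction: I would assume $R$ is reducible and show that this forces one of the two direct summands to be a singleton, contradicting the earlier lemma on the sizes of the summands.

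Suppose then that $R = A \oplus B$ for proper subsemigroups $A, B \lneq R$. The first step is to locate the distinguished element $x$ inside one of the summands. Since $R = A + B$, there exist $a \in A$ and $b \in B$ with $x = a + b$. Applying the first hypothesis to this equality forces $a = x$ or $b = x$, hence $x \in A$ or $x \in B$. Because the two summands play symmetric roles in the definition of $\oplus$, I may assume without loss of generality that $x \in A$.

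The second step exploits the absorbing property together with the uniqueness inherent in the direct-sum decomposition. For every $b' \in B$ the second hypothesis gives $x + b' = x$, so each equality $x = x + b'$ exhibits a decomposition of $x \in A + B$ whose $A$-component is exactly $x$. By uniqueness of such a representation, the $B$-component is determined, so all the elements $b'$ must coincide; that is, $B$ is a singleton, $\lvert B\rvert = 1$. Finally, I would invoke the preceding lemma, which asserts that in any direct-sum decomposition $R = A \oplus B$ of a finite reducible semigroup one has $1 < \lvert A\rvert, \lvert B\rvert < n$. This contradicts $\lvert B\rvert = 1$, so the assumption of reducibility is untenable and $R$ is irreducible.

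I expect the delicate points to be mostly bookkeeping: confirming that the ``without loss of generality'' reduction is legitimate (it is, by the symmetry of $\oplus$), and that uniqueness genuinely applies to $x$ (it does, since $x \in R = A + B$). The conceptual crux, and the step I would flag as doing the real work, is the observation that the absorbing law produces one decomposition of $x$ for each element of $B$, all sharing the same $A$-component, so that uniqueness collapses $B$ to a single point; the first hypothesis is precisely what is needed in advance to guarantee that $x$ lands entirely within one summand rather than being split nontrivially.
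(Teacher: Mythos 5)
Your proposal is correct and follows essentially the same argument as the paper: locate $x$ in one summand via the first hypothesis, then use the absorbing law $x+y=y+x=x$ to produce multiple decompositions of $x$ whose uniqueness collapses the other summand, contradicting the lemma that both summands of a reducible semigroup have size at least $2$. The only difference is cosmetic ordering—you derive $\lvert B\rvert=1$ and then invoke the lemma, while the paper invokes the lemma first to pick two distinct elements of $B$ and then contradicts uniqueness.
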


\begin{proof} 
The proof proceeds by contradiction. Suppose there exist subsemigroups $A, B$ such that $A \oplus B = R$, and let $x$ be the element mentioned earlier. Then, since $R = A \oplus B$, there exist $(a, b) \in A \times B$ such that $a + b = x$. By the first property, this implies that either $a = x$ or $b = x$. Without loss of generality, assume that $x \in A$.

By the previous lemma, we have $|B| \geq 2$, so there exist distinct elements $d, b \in B$ with $d \neq b$. However, this implies $x + d = x + b = x$, which contradicts the uniqueness of the representation. Hence, the assumption must be false. \end{proof}

\begin{example} \label{semianillo20elementos}

    In \cite{maze} it is presented a semiring with 20 elements for its cryptographyc applications. 
$$
    \begin{array}{c|cccccccccccccccccccc}
+ &0&a&b&c&d&e&f&g&h&i&j&k&l&m&n&o&p&q&r&1\\ \hline
0&0&a&b&c&d&e&f&g&h&i&j&k&l&m&n&o&p&q&r&1\\
a&a&a&b&c&d&e&f&g&h&i&j&k&l&m&n&o&p&q&r&1\\
b&b&b&b&c&e&e&f&g&h&i&k&k&l&m&n&o&p&q&r&1\\
c&c&c&c&c&f&f&f&h&h&i&l&l&l&1&n&p&p&q&r&1\\
d&d&d&e&f&d&e&f&g&h&i&j&k&l&m&n&o&p&q&r&1\\
e&e&e&e&f&e&e&f&g&h&i&k&k&l&m&n&o&p&q&r&1\\
f&f&f&f&f&f&f&f&h&h&i&l&l&l&1&n&p&p&q&r&1\\
g&g&g&g&h&g&g&h&g&h&i&m&m&1&m&n&o&p&q&r&1\\
h&h&h&h&h&h&h&h&h&h&i&1&1&1&1&n&p&p&q&r&1\\
i&i&i&i&i&i&i&i&i&i&i&n&n&n&n&n&q&q&q&r&n\\
j&j&j&k&l&j&k&l&m&1&n&j&k&l&m&n&o&p&q&r&1\\
k&k&k&k&l&k&k&l&m&1&n&k&k&l&m&n&o&p&q&r&1\\
l&l&l&l&l&l&l&l&1&1&n&l&l&l&1&n&p&p&q&r&1\\
m&m&m&m&1&m&m&1&m&1&n&m&m&1&m&n&o&p&q&r&1\\
n&n&n&n&n&n&n&n&n&n&n&n&n&n&n&n&q&q&q&r&n\\
o&o&o&o&p&o&o&p&o&p&q&o&o&p&o&q&o&p&q&r&p\\
p&p&p&p&p&p&p&p&p&p&q&p&p&p&p&q&p&p&q&r&p\\
q&q&q&q&q&q&q&q&q&q&q&q&q&q&q&q&q&q&q&r&q\\
r&r&r&r&r&r&r&r&r&r&r&r&r&r&r&r&r&r&r&r&r\\
1&1&1&1&1&1&1&1&1&1&n&1&1&1&1&n&p&p&q&r&1 
\end{array}
$$
$$
\begin{array}{c|cccccccccccccccccccc}
\cdot &0&a&b&c&d&e&f&g&h&i&j&k&l&m&n&o&p&q&r&1\\ \hline
0&0&0&0&0&0&0&0&0&0&0&0&0&0&0&0&0&0&0&0&0\\
a&0&0&0&0&0&0&0&0&0&0&a&a&a&a&a&b&b&b&c&a\\
b&0&0&0&0&a&a&a&b&b&c&a&a&a&b&c&b&b&c&c&b\\
c&0&a&b&c&a&b&c&b&c&c&a&b&c&b&c&b&c&c&c&c\\
d&0&0&0&0&0&0&0&0&0&0&d&d&d&d&d&g&g&g&i&d\\
e&0&0&0&0&a&a&a&b&b&c&d&d&d&e&f&g&g&h&i&e\\
f&0&a&b&c&a&b&c&b&c&c&d&e&f&e&f&g&h&h&i&f\\
g&0&0&0&0&d&d&d&g&g&i&d&d&d&g&i&g&g&i&i&g\\
h&0&a&b&c&d&e&f&g&h&i&d&e&f&g&i&g&h&i&i&h\\
i&0&d&g&i&d&g&i&g&i&i&d&g&i&g&i&g&i&i&i&i\\
j&0&0&0&0&0&0&0&0&0&0&j&j&j&j&j&o&o&o&r&j\\
k&0&0&0&0&a&a&a&b&b&c&j&j&j&k&l&o&o&p&r&k\\
l&0&a&b&c&a&b&c&b&c&c&j&k&l&k&l&o&p&p&r&l\\
m&0&0&0&0&d&d&d&g&g&i&j&j&j&m&n&o&o&q&r&m\\
n&0&d&g&i&d&g&i&g&i&i&j&m&n&m&n&o&q&q&r&n\\
o&0&0&0&0&j&j&j&o&o&r&j&j&j&o&r&o&o&r&r&o\\
p&0&a&b&c&j&k&l&o&p&r&j&k&l&o&r&o&p&r&r&p\\
q&0&d&g&i&j&m&n&o&q&r&j&m&n&o&r&o&q&r&r&q\\
r&0&j&o&r&j&o&r&o&r&r&j&o&r&o&r&o&r&r&r&r\\
1&0&a&b&c&d&e&f&g&h&i&j&k&l&m&n&o&p&q&r&1 
\end{array}
   $$

This semiring is irreducible as $r$ satisfy the conditions of the previous lemma

\end{example}

\begin{definition} 
Let $R$ be a semiring, and let $A \in \mathrm{Mat}_{n \times n}(R)$. We say that $A$ is a generalized permutation matrix if each row and each column contains exactly one nonzero entry, and this entry is invertible. \end{definition}

In the article \cite{kendziorra2013invertible}, an interesting relationship is established between the additive semigroup structure of a semiring and the invertibility of matrices with entries in that semiring.

\begin{theorem} 
Let $(R,+,\cdot)$ be a finite semiring such that $(R,+)$ is irreducible, and let $A \in \mathrm{Mat}_{n \times n}(R)$. Then $A$ is invertible if and only if $A$ is a generalized permutation matrix. 
\end{theorem}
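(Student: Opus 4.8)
The statement is an equivalence, and I would handle the two implications separately; the forward one is routine while the converse carries essentially all the content. For the forward implication, suppose $A$ is a generalized permutation matrix. The positions of its nonzero entries define a bijection $\sigma$ of $\{1,\dots,n\}$, and, writing $d_i$ for the invertible entry in row $i$, one obtains a factorization $A=DP$ with $D=\mathrm{diag}(d_1,\dots,d_n)$ and $P$ the ordinary permutation matrix of $\sigma$. Over a semiring with $0$ and $1$ a permutation matrix is invertible (its transpose is a two-sided inverse, because $\sigma$ is a bijection), and $D$ is invertible with inverse $\mathrm{diag}(d_1^{-1},\dots,d_n^{-1})$; hence $A$ is invertible. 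This direction needs neither finiteness nor irreducibility.

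For the converse, let $A=(a_{ij})$ be invertible with two-sided inverse $B=(b_{ij})$, so that $\sum_k a_{ik}b_{kj}=\delta_{ij}$ and $\sum_k b_{ik}a_{kj}=\delta_{ij}$, where $\delta_{ij}$ denotes the entries of the identity matrix. I would first invoke an annihilation principle supplied by the additive structure of $R$: a sum of elements of $R$ equal to $0$ forces every summand to be $0$ (which holds, for instance, whenever $(R,+)$ is additively idempotent, as for the semirings at hand). Granting this, each off-diagonal relation ($i\neq j$) gives $a_{ik}b_{kj}=0$ for all $k$, and symmetrically $b_{ik}a_{kj}=0$, while each diagonal relation $\sum_k a_{ik}b_{ki}=1$ shows that every row of $A$ carries at least one nonzero entry, and $\sum_k b_{ik}a_{ki}=1$ gives the same for every column.

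The decisive step is to sharpen ``at least one'' to ``exactly one'', and this is where irreducibility of $(R,+)$ is indispensable. I would argue by contradiction: from a row of $A$ carrying two distinct nonzero entries I would use the annihilation relations to split $R$ into two proper additively closed pieces and assemble a nontrivial direct sum $(R,+)=X\oplus Y$, contradicting the irreducibility criterion proved above, since the uniqueness-of-representation clause in the definition of $\oplus$ is exactly what two competing nonzero positions destroy. Once every row has exactly one nonzero entry, the $n$ nonzero entries must occupy the $n$ columns one apiece (each column already has at least one), so the pattern is a permutation $\sigma$; then each diagonal relation collapses to $a_{i\sigma(i)}b_{\sigma(i)i}=1$, and because $R$ is finite a right inverse in its multiplicative monoid is automatically two-sided, so each surviving entry is invertible and $A$ is a generalized permutation matrix. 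I expect the genuine obstacle to be precisely the construction of the decomposition $X\oplus Y$: one must produce two subsemigroups that are disjoint, sum to all of $R$, and admit unique representations, using only the multiplicative annihilation coming from $AB=BA=I$, and this is the step that consumes both finiteness and the full force of irreducibility.
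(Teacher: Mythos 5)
The paper itself offers no proof of this theorem: it is quoted as a result of \cite{kendziorra2013invertible}, so your proposal can only be judged on its own merits, and on those merits it has a genuine gap. The forward implication is fine (the factorization $A=DP$, with $P^{T}$ inverting $P$ and $\mathrm{diag}(d_1^{-1},\dots,d_n^{-1})$ inverting $D$, is routine), as are the easy parts of the converse: the pigeonhole step from ``exactly one nonzero per row'' to a permutation pattern, and the observation that in a finite monoid a one-sided inverse is two-sided. But the decisive step --- passing from a row of $A$ with two nonzero entries to a direct-sum decomposition $(R,+)=X\oplus Y$ of proper subsemigroups --- is not carried out at all; you say explicitly that you ``expect the genuine obstacle to be precisely the construction of the decomposition.'' That construction is the entire content of the theorem. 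You never propose candidate subsemigroups $X,Y$, nor indicate how the annihilation relations $a_{ik}b_{kj}=0$ coming from $AB=BA=I$ would yield the three required properties (disjointness, $X+Y=R$, uniqueness of representation). As it stands the converse is a plausible strategy outline with its key lemma conjectured, not a proof.

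There is a second, more instructive problem: your ``annihilation principle'' (a sum equal to $0$ forces all summands to be $0$) is not available under the stated hypotheses. You justify it only under additive idempotency, ``as for the semirings at hand,'' but idempotency appears nowhere in the statement, and irreducibility of $(R,+)$ does not imply it. In fact the statement as printed is false without it: take $R=\mathbb{F}_2$. Its additive semigroup has $\{0\}$ as its only proper subsemigroup (since $\{1\}$ is not additively closed), so no decomposition into two proper disjoint subsemigroups exists and $(R,+)$ is irreducible in the paper's sense; yet
\begin{equation*}
\begin{pmatrix} 1 & 1 \\ 0 & 1 \end{pmatrix} \in \mathrm{GL}_2(\mathbb{F}_2)
\end{equation*}
is invertible and is not a generalized permutation matrix. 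The cited source proves the theorem for finite \emph{additively idempotent} semirings, a hypothesis the paper's transcription drops; so your instinct to smuggle in idempotency is not merely convenient but necessary. You should have flagged this as a correction to the statement rather than treating idempotency as one sufficient instance --- and it shows that any successful proof of the hard direction must use idempotency (or zero-sum-freeness) together with irreducibility, not irreducibility alone as your sketch suggests.
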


Now, we will see how to obtain the matrices we need. First, we take a congruence-free semiring $R$ and a natural number $n$. We choose elements $a_i \in \mathbb{N}$, for $i = 1, \dots, k$, such that $\text{LCM} = \mathrm{lcm}(a_1, \dots, a_k)$ is sufficiently large, with the constraint that $\sum_{i=1}^n a_i \leq n$. Then, we construct the matrix associated with this partition as in Theorem \ref{LargeOrderMatrix}. This matrix will be of the form:

\[
    \left (
        \begin{array}{r | r | r | r | r}
            T_{a_{1}} & 0 & \cdots & 0 &0\\  \hline
            0 & T_{a_{2}}  & \cdots & 0 &0\\ \hline
            \vdots & \vdots & \ddots & \vdots & \vdots\\\hline
            0 & 0  & \cdots & T_{a_{r}} &0\\ \hline
            0 & 0  & \cdots & 0 & Id_{s}\\ 
            
        \end{array}
    \right )
    \]
Next, we proceed to modify the elements above the block diagonal, obtaining a matrix of the form:

\[
    M=\left (
        \begin{array}{r | r | r | r | r | r}
            T_{a_{1}} & A_{1,2} & A_{1,3} & \cdots & A_{1,r} &A_{1,r+1}\\  \hline
            0 & T_{a_{2}}  & A_{2,2} & \cdots & A_{2,r} &A_{2,r+1}\\ \hline
            0 & 0  & T_{a_{3}} & \cdots & A_{2,r} &A_{2,r+1}\\ \hline
            \vdots & \vdots & \vdots & \ddots & \vdots & \vdots\\\hline
            0 & 0  & 0  &\cdots & T_{a_{r}} &A_{r,r+1}\\ \hline
            0 & 0  & 0  &\cdots & 0 & Id_{s}\\ 
            
        \end{array}
    \right )
    \]

Where the matrices $A_{i,j} \in  {Mat}_{a_i\times a_j}(R)$  are rectangular matrices with entries in the semiring $R$. Since the diagonal and the zero blocks below it remain unchanged, the powers of this matrix will be of the form:

\[
    M^s=\left (
        \begin{array}{r | r | r | r | r | r}
            T_{a_{1}}^s & B_{1,2} & B_{1,3} & \cdots & B_{1,r} &B_{1,r+1}\\  \hline
            0 & T_{a_{2}}^s  & B_{2,2} & \cdots & B_{2,r} &B_{2,r+1}\\ \hline
            0 & 0  & T_{a_{3}}^s & \cdots & B_{2,r} &B_{2,r+1}\\ \hline
            \vdots & \vdots & \vdots & \ddots & \vdots & \vdots\\\hline
            0 & 0  & 0  &\cdots & T_{a_{r}}^s &B_{r,r+1}\\ \hline
            0 & 0  & 0  &\cdots & 0 & Id_{s}\\ 
            
        \end{array}
    \right )
    \]

with $B_{i,j} \in \mathrm{Mat}_{a_i \times a_j}(R)$.

By previous result, the unique inversible matrix are generalized permutation matrix. So we can change rows and collumns of a matrix to make more difficult to calculate its powers. As this changes are in biyection with $S_n$, there are $n!$ possible movements.  For $n$ large enough, an attack of brute force to invert this process is not computationaly feasible.

\begin{example}
    We will take the semiring of 20 elements introduced in \cite{maze}, and let $n = 6$. We take the partition $[2,3]$. Then, $\mathrm{lcm}(2,3) = 6$, and we obtain the block matrix: 
    
  \begin{center}
      $\left( \begin{array}{cccccc}
0&1&0&0&0&0\\
1&0&0&0&0&0\\
0&0&0&1&0&0\\
0&0&0&0&1&0\\
0&0&1&0&0&0\\
0&0&0&0&0&1 
\end{array} \right)$

  \end{center} 

In this matrix, we modify the elements above the diagonal randomly, obtaining

\begin{center}
 $\left( \begin{array}{cccccc}
0&1&0&0&0&0\\
1&0&1&r&b&l\\
0&0&0&1&0&e\\
0&0&0&0&1&0\\
0&0&1&0&0&0\\
0&0&0&0&0&1 
\end{array} \right)$
\end{center}

Now, we conjugate this matrix by a generalized permutation matrix. Upon performing the calculations, we obtain

\begin{center}
    \begin{tabular}{cc}
    Output & Generalized permutation matrix \\
        $\left( \begin{tabular}{cccccc}
            0&0&0&1&0&0\\
            0&0&1&0&0&e\\
            0&0&0&0&1&0\\
            1&1&r&0&b&l\\
            0&1&0&0&0&0\\
            0&0&0&0&0&1 
        \end{tabular} \right)$  &  

        $\left( \begin{tabular}{cccccc}
            1&0&0&0&0&0\\
            0&0&1&0&0&0\\
            0&0&0&1&0&0\\
            0&1&0&0&0&0\\
            0&0&0&0&1&0\\
            0&0&0&0&0&1 
        \end{tabular} \right)$
    \end{tabular}
\end{center}
\end{example}

\begin{example} \label{EjemploCalculado}
  Now, we will show an example with at least 280 distinct powers and of size $20 \times 20$. To do this, it is enough to notice that the set $[8,5,7]$ satisfies that its sum is 20 and they are pairwise coprime, so their least common multiple is $8 \cdot 5 \cdot 7 = 280$. The matrix resulting from this partition would be:

  \[\left( \begin{array}{cccccccccccccccccccc}
0&1&0&0&0&0&0&0&0&0&0&0&0&0&0&0&0&0&0&0\\
0&0&1&0&0&0&0&0&0&0&0&0&0&0&0&0&0&0&0&0\\
0&0&0&1&0&0&0&0&0&0&0&0&0&0&0&0&0&0&0&0\\
0&0&0&0&1&0&0&0&0&0&0&0&0&0&0&0&0&0&0&0\\
0&0&0&0&0&1&0&0&0&0&0&0&0&0&0&0&0&0&0&0\\
0&0&0&0&0&0&1&0&0&0&0&0&0&0&0&0&0&0&0&0\\
0&0&0&0&0&0&0&1&0&0&0&0&0&0&0&0&0&0&0&0\\
1&0&0&0&0&0&0&0&0&0&0&0&0&0&0&0&0&0&0&0\\
0&0&0&0&0&0&0&0&0&1&0&0&0&0&0&0&0&0&0&0\\
0&0&0&0&0&0&0&0&0&0&1&0&0&0&0&0&0&0&0&0\\
0&0&0&0&0&0&0&0&0&0&0&1&0&0&0&0&0&0&0&0\\
0&0&0&0&0&0&0&0&0&0&0&0&1&0&0&0&0&0&0&0\\
0&0&0&0&0&0&0&0&1&0&0&0&0&0&0&0&0&0&0&0\\
0&0&0&0&0&0&0&0&0&0&0&0&0&0&1&0&0&0&0&0\\
0&0&0&0&0&0&0&0&0&0&0&0&0&0&0&1&0&0&0&0\\
0&0&0&0&0&0&0&0&0&0&0&0&0&0&0&0&1&0&0&0\\
0&0&0&0&0&0&0&0&0&0&0&0&0&0&0&0&0&1&0&0\\
0&0&0&0&0&0&0&0&0&0&0&0&0&0&0&0&0&0&1&0\\
0&0&0&0&0&0&0&0&0&0&0&0&0&0&0&0&0&0&0&1\\
0&0&0&0&0&0&0&0&0&0&0&0&0&1&0&0&0&0&0&0 
\end{array} \right)\]

In this matrix, we modify the elements above the diagonal randomly, obtaining

\[ \left( \begin{array}{cccccccccccccccccccc}
0&1&0&0&0&0&0&0&0&0&0&0&0&0&0&0&0&0&0&0\\
0&0&1&0&0&0&0&0&0&0&0&0&f&k&r&0&c&g&f&0\\
0&0&0&1&0&0&0&0&o&0&p&0&h&0&g&0&0&0&0&e\\
0&0&0&0&1&0&0&0&m&0&g&b&0&0&0&0&0&0&0&0\\
0&0&0&0&0&1&0&0&l&l&j&b&0&0&0&p&0&0&q&0\\
0&0&0&0&0&0&1&0&0&k&0&o&0&0&i&0&0&0&n&0\\
0&0&0&0&0&0&0&1&0&0&0&j&0&0&k&b&0&0&p&0\\
1&0&0&0&0&0&0&0&0&0&a&0&0&0&0&0&0&d&0&0\\
0&0&0&0&0&0&0&0&0&1&0&0&0&0&q&g&0&p&d&d\\
0&0&0&0&0&0&0&0&0&0&1&0&0&0&0&a&0&0&0&0\\
0&0&0&0&0&0&0&0&0&0&0&1&0&o&b&1&0&r&0&l\\
0&0&0&0&0&0&0&0&0&0&0&0&1&0&0&c&0&o&m&0\\
0&0&0&0&0&0&0&0&1&0&0&0&0&o&0&0&0&0&0&0\\
0&0&0&0&0&0&0&0&0&0&0&0&0&0&1&0&0&0&0&0\\
0&0&0&0&0&0&0&0&0&0&0&0&0&0&0&1&0&0&0&0\\
0&0&0&0&0&0&0&0&0&0&0&0&0&0&0&0&1&0&0&0\\
0&0&0&0&0&0&0&0&0&0&0&0&0&0&0&0&0&1&0&0\\
0&0&0&0&0&0&0&0&0&0&0&0&0&0&0&0&0&0&1&0\\
0&0&0&0&0&0&0&0&0&0&0&0&0&0&0&0&0&0&0&1\\
0&0&0&0&0&0&0&0&0&0&0&0&0&1&0&0&0&0&0&0 
\end{array} \right)\]
Now, we conjugate this matrix by a generalized permutation matrix. Upon performing the calculations, we obtain

\begin{center}
    \begin{tabular}{c}
    Output  \\
        $\left( \begin{array}{cccccccccccccccccccc}
0&0&0&1&0&0&0&0&0&0&0&0&0&0&0&0&0&0&0&0\\
0&0&0&0&0&0&0&0&0&0&0&0&0&0&1&0&0&d&a&0\\
0&0&0&0&0&0&0&0&0&0&0&0&a&0&0&0&0&0&1&0\\
0&0&0&0&0&0&0&0&0&0&0&1&0&0&0&0&0&0&0&0\\
n&0&k&0&0&i&1&0&0&0&0&0&0&o&0&0&0&0&0&0\\
0&0&0&0&0&0&0&0&0&0&0&0&1&0&0&0&0&0&0&0\\
p&1&0&0&0&k&0&0&0&0&0&0&b&j&0&0&0&0&0&0\\
f&0&0&0&0&r&0&0&0&1&f&k&0&0&0&0&0&g&0&c\\
0&0&0&0&0&0&0&0&0&0&0&0&0&b&0&1&m&0&g&0\\
0&0&0&e&0&g&0&0&1&0&h&0&0&0&0&0&o&0&p&0\\
0&0&0&0&0&0&0&0&0&0&0&o&0&0&0&0&1&0&0&0\\
0&0&0&0&0&1&0&0&0&0&0&0&0&0&0&0&0&0&0&0\\
0&0&0&0&0&0&0&0&0&0&0&0&0&0&0&0&0&0&0&1\\
m&0&0&0&0&0&0&0&0&0&1&0&c&0&0&0&0&o&0&0\\
0&0&0&0&0&0&0&1&0&0&0&0&0&0&0&0&0&0&0&0\\
q&0&l&0&1&0&0&0&0&0&0&0&p&b&0&0&l&0&j&0\\
d&0&1&d&0&q&0&0&0&0&0&0&g&0&0&0&0&p&0&0\\
1&0&0&0&0&0&0&0&0&0&0&0&0&0&0&0&0&0&0&0\\
0&0&0&l&0&b&0&0&0&0&0&o&1&1&0&0&0&r&0&0\\
0&0&0&0&0&0&0&0&0&0&0&0&0&0&0&0&0&1&0&0 
\end{array} \right) $ 
\end{tabular}
\end{center}

\begin{center}
 \begin{tabular}{c}
     
         Generalized permutation matrix \\
        $\left( \begin{array}{cccccccccccccccccccc}
0&0&0&0&0&0&0&0&0&0&0&0&0&0&0&0&0&0&1&0\\
0&0&0&0&0&0&0&1&0&0&0&0&0&0&0&0&0&0&0&0\\
0&0&0&0&0&0&0&0&0&1&0&0&0&0&0&0&0&0&0&0\\
0&0&0&0&0&0&0&0&0&0&0&0&0&0&0&0&0&0&0&1\\
0&0&0&0&0&1&0&0&0&0&0&0&0&0&0&0&0&0&0&0\\
0&0&0&0&0&0&0&0&0&0&0&0&0&0&1&0&0&0&0&0\\
0&0&0&0&0&0&1&0&0&0&0&0&0&0&0&0&0&0&0&0\\
0&1&0&0&0&0&0&0&0&0&0&0&0&0&0&0&0&0&0&0\\
0&0&0&1&0&0&0&0&0&0&0&0&0&0&0&0&0&0&0&0\\
0&0&1&0&0&0&0&0&0&0&0&0&0&0&0&0&0&0&0&0\\
0&0&0&0&0&0&0&0&0&0&0&0&1&0&0&0&0&0&0&0\\
0&0&0&0&0&0&0&0&0&0&0&0&0&1&0&0&0&0&0&0\\
0&0&0&0&0&0&0&0&0&0&0&0&0&0&0&1&0&0&0&0\\
0&0&0&0&0&0&0&0&0&0&0&1&0&0&0&0&0&0&0&0\\
1&0&0&0&0&0&0&0&0&0&0&0&0&0&0&0&0&0&0&0\\
0&0&0&0&1&0&0&0&0&0&0&0&0&0&0&0&0&0&0&0\\
0&0&0&0&0&0&0&0&1&0&0&0&0&0&0&0&0&0&0&0\\
0&0&0&0&0&0&0&0&0&0&0&0&0&0&0&0&0&1&0&0\\
0&0&0&0&0&0&0&0&0&0&1&0&0&0&0&0&0&0&0&0\\
0&0&0&0&0&0&0&0&0&0&0&0&0&0&0&0&1&0&0&0 
\end{array} \right)$
    \end{tabular}
\end{center}
\end{example}
\subsection{Example}
With all the previous result, we present an example of the key exchange protocol. We will use 
\begin{equation}
    M=\left( \begin{array}{ccccccccccccccccccccc}
0&0&0&0&d&1&0&1&l&0&0&0&b&0&0&0&0&j&0&0&0\\
0&0&0&1&h&0&0&0&a&b&1&0&0&0&0&0&0&0&0&b&1\\
0&0&0&d&0&0&0&0&0&0&r&0&o&0&0&1&0&0&e&g&0\\
0&0&0&0&0&0&0&0&0&0&0&0&1&0&0&0&0&0&0&0&0\\
0&0&0&1&0&0&0&0&0&0&0&0&0&0&0&0&0&0&0&0&0\\
0&h&0&0&0&0&0&e&0&d&0&0&m&0&1&0&f&0&0&0&0\\
0&0&0&0&0&0&0&0&0&0&0&0&0&1&0&0&0&0&0&0&0\\
0&1&0&o&0&0&0&0&0&0&0&0&i&0&0&0&0&b&0&i&r\\
0&0&0&0&0&0&0&0&0&0&0&0&0&0&0&0&0&1&0&0&0\\
0&0&0&0&0&0&0&0&0&1&0&0&0&0&0&0&0&0&0&0&0\\
0&0&0&n&g&0&0&0&e&r&0&0&d&0&0&0&1&n&0&p&0\\
0&0&1&l&0&0&0&i&d&0&0&0&0&0&0&0&p&0&0&0&o\\
0&0&0&0&0&0&0&0&1&0&0&0&0&0&0&0&0&0&0&0&0\\
0&0&0&0&h&0&0&k&0&0&g&1&0&0&0&0&0&0&b&0&b\\
0&i&0&m&0&0&1&0&0&m&0&0&0&0&0&0&q&0&0&e&0\\
1&0&0&0&0&0&0&f&1&0&0&0&d&0&0&0&0&k&n&0&g\\
0&0&0&g&0&0&0&0&0&o&0&0&0&0&0&0&0&h&1&0&0\\
0&0&0&0&0&0&0&0&0&0&0&0&0&0&0&0&0&0&0&0&1\\
0&0&0&f&0&0&0&1&j&0&0&0&e&0&0&0&0&1&0&f&m\\
0&0&0&0&1&0&0&0&0&0&0&0&0&0&0&0&0&0&0&0&0\\
0&0&0&0&0&0&0&0&0&0&0&0&0&0&0&0&0&0&0&1&0 
\end{array} \right)
\end{equation}
The private keys are
\[\begin{array}{cc}
    C_1=\left( \begin{array}{cccc}
75&51&87&95\\
95&75&51&87\\
87&95&75&51\\
51&87&95&75 
\end{array} \right) & C_2=\left( \begin{array}{cccc}
43&86&77&77\\
77&43&86&77\\
77&77&43&86\\
86&77&77&43 
\end{array} \right) 
\end{array}
\]
As the commutative set, we have 

\setlength{\arraycolsep}{2pt} 
$\begin{array}{c|c}
\left( \begin{array}{ccccccccccccccccccccc}
1&0&0&0&1&r&h&q&r&r&0&m&0&0&0&1&1&0&r&q&0\\
q&1&0&q&r&r&q&r&r&r&0&r&1&0&1&q&p&1&r&r&0\\
q&1&1&r&r&r&r&r&r&r&1&r&0&0&0&r&q&1&r&r&0\\
1&0&0&1&1&r&q&r&r&r&0&q&0&0&0&0&1&0&n&r&0\\
1&0&0&1&1&r&n&r&r&r&0&r&0&0&0&1&0&0&r&q&0\\
0&0&0&0&0&1&1&1&0&0&0&1&0&0&0&0&0&0&0&1&0\\
0&0&0&0&0&1&1&0&1&0&0&1&0&0&0&0&0&0&1&0&0\\
0&0&0&0&0&0&1&1&1&0&0&1&0&0&0&0&0&0&1&0&0\\
0&0&0&0&0&1&0&1&1&0&0&0&0&0&0&0&0&0&1&1&0\\
0&0&0&0&0&0&0&0&0&1&0&0&0&0&0&0&0&0&0&0&0\\
p&0&1&q&q&r&r&r&r&r&1&r&1&0&0&r&q&0&r&q&1\\
0&0&0&0&0&1&0&0&1&0&0&1&0&0&0&0&0&0&1&1&0\\
q&1&0&r&r&r&r&r&r&r&0&r&1&1&0&r&r&0&r&r&1\\
q&1&1&r&q&r&q&r&r&r&0&r&0&1&1&q&p&0&n&r&0\\
r&0&0&r&q&r&r&r&r&r&1&r&1&1&1&r&q&0&r&r&0\\
1&0&0&1&0&r&r&r&r&r&0&r&0&0&0&1&1&0&q&n&0\\
0&0&0&1&1&n&q&p&r&r&0&r&0&0&0&1&1&0&q&q&0\\
r&0&0&q&r&r&q&r&r&r&1&q&0&0&1&r&r&1&r&r&1\\
0&0&0&0&0&1&1&1&0&0&0&0&0&0&0&0&0&0&1&1&0\\
0&0&0&0&0&0&1&1&1&0&0&1&0&0&0&0&0&0&0&1&0\\
p&0&1&r&q&r&r&r&r&r&0&r&0&1&0&q&r&1&r&r&1 
\end{array} \right) &\left( \begin{array}{ccccccccccccccccccccc}
1&0&0&1&1&n&q&r&r&r&0&q&0&0&0&1&1&0&r&r&0\\
q&1&1&r&r&r&r&r&r&r&0&r&0&1&1&p&r&0&r&r&1\\
p&0&1&r&q&r&r&r&r&r&0&r&1&1&0&r&r&1&r&r&1\\
1&0&0&1&1&r&r&q&r&r&0&r&0&0&0&1&1&0&q&r&0\\
1&0&0&1&1&r&q&r&r&r&0&r&0&0&0&1&1&0&q&r&0\\
0&0&0&0&0&1&0&1&1&0&0&1&0&0&0&0&0&0&1&0&0\\
0&0&0&0&0&1&1&1&1&0&0&0&0&0&0&0&0&0&0&1&0\\
0&0&0&0&0&1&0&1&0&0&0&1&0&0&0&0&0&0&1&1&0\\
0&0&0&0&0&1&1&1&1&0&0&1&0&0&0&0&0&0&0&0&0\\
0&0&0&0&0&0&0&0&0&1&0&0&0&0&0&0&0&0&0&0&0\\
q&1&0&q&r&r&r&r&r&r&1&r&0&0&1&r&q&1&r&r&1\\
0&0&0&0&0&0&1&1&0&0&0&1&0&0&0&0&0&0&1&1&0\\
r&0&0&r&r&r&r&r&r&r&1&r&1&1&1&r&q&1&r&r&0\\
q&1&0&r&q&r&r&r&r&r&1&r&1&1&0&r&q&0&r&r&1\\
r&1&1&r&r&r&r&r&r&r&0&r&1&0&1&q&p&1&r&r&0\\
1&0&0&1&1&r&r&r&r&r&0&q&0&0&0&1&1&0&r&q&0\\
1&0&0&1&1&r&n&q&q&r&0&r&0&0&0&1&1&0&r&q&0\\
q&1&1&r&r&r&r&r&r&r&1&r&0&1&0&r&r&1&r&r&0\\
0&0&0&0&0&0&1&0&1&0&0&1&0&0&0&0&0&0&1&1&0\\
0&0&0&0&0&1&1&0&1&0&0&0&0&0&0&0&0&0&1&1&0\\
r&0&1&q&p&r&r&r&r&r&1&r&1&0&1&r&r&0&r&r&1 
\end{array} \right) \\ \hline
\left( \begin{array}{ccccccccccccccccccccc}
1&0&0&1&1&r&n&r&r&r&0&r&0&0&0&1&1&0&r&q&0\\
q&1&0&r&r&r&q&r&r&r&1&r&1&1&1&r&q&1&r&r&1\\
r&1&1&r&r&r&r&r&r&r&1&r&1&0&1&r&r&1&r&r&1\\
1&0&0&1&1&r&q&r&r&r&0&r&0&0&0&1&1&0&r&r&0\\
1&0&0&1&1&r&r&r&r&r&0&r&0&0&0&1&1&0&r&r&0\\
0&0&0&0&0&1&1&1&1&0&0&1&0&0&0&0&0&0&0&1&0\\
0&0&0&0&0&1&1&0&1&0&0&1&0&0&0&0&0&0&1&1&0\\
0&0&0&0&0&1&1&1&1&0&0&1&0&0&0&0&0&0&1&0&0\\
0&0&0&0&0&1&1&1&1&0&0&0&0&0&0&0&0&0&1&1&0\\
0&0&0&0&0&0&0&0&0&1&0&0&0&0&0&0&0&0&0&0&0\\
q&1&1&r&q&r&r&r&r&r&1&r&1&1&0&r&q&1&r&r&1\\
0&0&0&0&0&1&0&1&1&0&0&1&0&0&0&0&0&0&1&1&0\\
r&1&1&r&r&r&r&r&r&r&0&r&1&1&1&r&r&1&r&r&1\\
q&1&1&r&q&r&q&r&r&r&1&r&1&1&1&q&q&1&r&r&0\\
r&1&1&r&r&r&r&r&r&r&1&r&1&1&1&r&r&0&r&r&1\\
1&0&0&1&1&r&r&r&r&r&0&r&0&0&0&1&1&0&r&r&0\\
1&0&0&1&1&r&q&r&r&r&0&r&0&0&0&1&1&0&q&q&0\\
r&0&1&r&r&r&r&r&r&r&1&r&1&1&1&r&r&1&r&r&1\\
0&0&0&0&0&1&1&1&0&0&0&1&0&0&0&0&0&0&1&1&0\\
0&0&0&0&0&0&1&1&1&0&0&1&0&0&0&0&0&0&1&1&0\\
q&1&1&r&r&r&r&r&r&r&1&r&0&1&1&r&r&1&r&r&1 
\end{array} \right) &\left( \begin{array}{ccccccccccccccccccccc}
1&0&0&1&1&r&q&r&r&r&0&r&0&0&0&1&1&0&q&r&0\\
p&1&1&r&q&r&r&r&r&r&1&r&1&1&0&r&r&1&r&r&1\\
r&1&1&r&r&r&r&r&r&r&1&r&1&1&1&r&r&0&r&r&1\\
1&0&0&1&1&r&r&r&r&r&0&r&0&0&0&1&1&0&r&q&0\\
1&0&0&1&1&r&r&r&r&r&0&r&0&0&0&1&1&0&r&r&0\\
0&0&0&0&0&1&1&0&1&0&0&1&0&0&0&0&0&0&1&1&0\\
0&0&0&0&0&1&1&1&0&0&0&1&0&0&0&0&0&0&1&1&0\\
0&0&0&0&0&1&1&1&1&0&0&0&0&0&0&0&0&0&1&1&0\\
0&0&0&0&0&0&1&1&1&0&0&1&0&0&0&0&0&0&1&1&0\\
0&0&0&0&0&0&0&0&0&1&0&0&0&0&0&0&0&0&0&0&0\\
q&1&1&r&r&r&q&r&r&r&1&r&1&1&1&q&q&1&r&r&0\\
0&0&0&0&0&1&1&1&1&0&0&1&0&0&0&0&0&0&0&1&0\\
r&1&1&r&r&r&r&r&r&r&1&r&1&0&1&r&r&1&r&r&1\\
q&0&1&q&q&r&r&r&r&r&1&r&1&1&1&r&q&1&r&q&1\\
q&1&1&r&r&r&r&r&r&r&1&r&0&1&1&r&r&1&r&r&1\\
1&0&0&1&1&r&q&r&r&r&0&r&0&0&0&1&1&0&r&r&0\\
1&0&0&1&1&r&q&r&r&r&0&q&0&0&0&1&1&0&r&q&0\\
r&1&1&r&r&r&r&r&r&r&0&r&1&1&1&r&r&1&r&r&1\\
0&0&0&0&0&1&1&1&1&0&0&1&0&0&0&0&0&0&1&0&0\\
0&0&0&0&0&1&0&1&1&0&0&1&0&0&0&0&0&0&1&1&0\\
r&1&0&r&r&r&r&r&r&r&1&r&1&1&1&r&q&1&r&r&1 
\end{array} \right)\\
\end{array}$
\vspace{5cm}

and as $pk_1$ and $pk_2$

$pk_1=\begin{array}{cc}
    \left( \begin{array}{ccccccccccccccccccccc}
1&0&0&1&1&r&r&r&r&r&0&r&0&0&0&1&1&0&r&r&0\\
r&1&1&r&r&r&r&r&r&r&1&r&1&1&1&r&r&1&r&r&1\\
r&1&1&r&r&r&r&r&r&r&1&r&1&1&1&r&r&1&r&r&1\\
1&0&0&1&1&r&r&r&r&r&0&r&0&0&0&1&1&0&r&r&0\\
1&0&0&1&1&r&r&r&r&r&0&r&0&0&0&1&1&0&r&r&0\\
0&0&0&0&0&1&1&1&1&0&0&1&0&0&0&0&0&0&1&1&0\\
0&0&0&0&0&1&1&1&1&0&0&1&0&0&0&0&0&0&1&1&0\\
0&0&0&0&0&1&1&1&1&0&0&1&0&0&0&0&0&0&1&1&0\\
0&0&0&0&0&1&1&1&1&0&0&1&0&0&0&0&0&0&1&1&0\\
0&0&0&0&0&0&0&0&0&1&0&0&0&0&0&0&0&0&0&0&0\\
r&1&1&r&r&r&r&r&r&r&1&r&1&1&1&r&r&1&r&r&1\\
0&0&0&0&0&1&1&1&1&0&0&1&0&0&0&0&0&0&1&1&0\\
r&1&1&r&r&r&r&r&r&r&1&r&1&1&1&r&r&1&r&r&1\\
r&1&1&r&r&r&r&r&r&r&1&r&1&1&1&r&r&1&r&r&1\\
r&1&1&r&r&r&r&r&r&r&1&r&1&1&1&r&r&1&r&r&1\\
1&0&0&1&1&r&r&r&r&r&0&r&0&0&0&1&1&0&r&r&0\\
1&0&0&1&1&r&r&r&r&r&0&r&0&0&0&1&1&0&r&r&0\\
r&1&1&r&r&r&r&r&r&r&1&r&1&1&1&r&r&1&r&r&1\\
0&0&0&0&0&1&1&1&1&0&0&1&0&0&0&0&0&0&1&1&0\\
0&0&0&0&0&1&1&1&1&0&0&1&0&0&0&0&0&0&1&1&0\\
r&1&1&r&r&r&r&r&r&r&1&r&1&1&1&r&r&1&r&r&1 
\end{array} \right) &  \left( \begin{array}{ccccccccccccccccccccc}
1&0&0&1&1&r&r&r&r&r&0&r&0&0&0&1&1&0&r&r&0\\
r&1&1&r&r&r&r&r&r&r&1&r&1&1&1&r&r&1&r&r&1\\
r&1&1&r&r&r&r&r&r&r&1&r&1&1&1&r&r&1&r&r&1\\
1&0&0&1&1&r&r&r&r&r&0&r&0&0&0&1&1&0&r&r&0\\
1&0&0&1&1&r&r&r&r&r&0&r&0&0&0&1&1&0&r&r&0\\
0&0&0&0&0&1&1&1&1&0&0&1&0&0&0&0&0&0&1&1&0\\
0&0&0&0&0&1&1&1&1&0&0&1&0&0&0&0&0&0&1&1&0\\
0&0&0&0&0&1&1&1&1&0&0&1&0&0&0&0&0&0&1&1&0\\
0&0&0&0&0&1&1&1&1&0&0&1&0&0&0&0&0&0&1&1&0\\
0&0&0&0&0&0&0&0&0&1&0&0&0&0&0&0&0&0&0&0&0\\
r&1&1&r&r&r&r&r&r&r&1&r&1&1&1&r&r&1&r&r&1\\
0&0&0&0&0&1&1&1&1&0&0&1&0&0&0&0&0&0&1&1&0\\
r&1&1&r&r&r&r&r&r&r&1&r&1&1&1&r&r&1&r&r&1\\
r&1&1&r&r&r&r&r&r&r&1&r&1&1&1&r&r&1&r&r&1\\
r&1&1&r&r&r&r&r&r&r&1&r&1&1&1&r&r&1&r&r&1\\
1&0&0&1&1&r&r&r&r&r&0&r&0&0&0&1&1&0&r&r&0\\
1&0&0&1&1&r&r&r&r&r&0&r&0&0&0&1&1&0&r&r&0\\
r&1&1&r&r&r&r&r&r&r&1&r&1&1&1&r&r&1&r&r&1\\
0&0&0&0&0&1&1&1&1&0&0&1&0&0&0&0&0&0&1&1&0\\
0&0&0&0&0&1&1&1&1&0&0&1&0&0&0&0&0&0&1&1&0\\
r&1&1&r&r&r&r&r&r&r&1&r&1&1&1&r&r&1&r&r&1 
\end{array} \right)\\
   \left( \begin{array}{ccccccccccccccccccccc}
1&0&0&1&1&r&r&r&r&r&0&r&0&0&0&1&1&0&r&r&0\\
r&1&1&r&r&r&r&r&r&r&1&r&1&1&1&r&r&1&r&r&1\\
r&1&1&r&r&r&r&r&r&r&1&r&1&1&1&r&r&1&r&r&1\\
1&0&0&1&1&r&r&r&r&r&0&r&0&0&0&1&1&0&r&r&0\\
1&0&0&1&1&r&r&r&r&r&0&r&0&0&0&1&1&0&r&r&0\\
0&0&0&0&0&1&1&1&1&0&0&1&0&0&0&0&0&0&1&1&0\\
0&0&0&0&0&1&1&1&1&0&0&1&0&0&0&0&0&0&1&1&0\\
0&0&0&0&0&1&1&1&1&0&0&1&0&0&0&0&0&0&1&1&0\\
0&0&0&0&0&1&1&1&1&0&0&1&0&0&0&0&0&0&1&1&0\\
0&0&0&0&0&0&0&0&0&1&0&0&0&0&0&0&0&0&0&0&0\\
r&1&1&r&r&r&r&r&r&r&1&r&1&1&1&r&r&1&r&r&1\\
0&0&0&0&0&1&1&1&1&0&0&1&0&0&0&0&0&0&1&1&0\\
r&1&1&r&r&r&r&r&r&r&1&r&1&1&1&r&r&1&r&r&1\\
r&1&1&r&r&r&r&r&r&r&1&r&1&1&1&r&r&1&r&r&1\\
r&1&1&r&r&r&r&r&r&r&1&r&1&1&1&r&r&1&r&r&1\\
1&0&0&1&1&r&r&r&r&r&0&r&0&0&0&1&1&0&r&r&0\\
1&0&0&1&1&r&r&r&r&r&0&r&0&0&0&1&1&0&r&r&0\\
r&1&1&r&r&r&r&r&r&r&1&r&1&1&1&r&r&1&r&r&1\\
0&0&0&0&0&1&1&1&1&0&0&1&0&0&0&0&0&0&1&1&0\\
0&0&0&0&0&1&1&1&1&0&0&1&0&0&0&0&0&0&1&1&0\\
r&1&1&r&r&r&r&r&r&r&1&r&1&1&1&r&r&1&r&r&1 
\end{array} \right)  & \left( \begin{array}{ccccccccccccccccccccc}
1&0&0&1&1&r&r&r&r&r&0&r&0&0&0&1&1&0&r&r&0\\
r&1&1&r&r&r&r&r&r&r&1&r&1&1&1&r&r&1&r&r&1\\
r&1&1&r&r&r&r&r&r&r&1&r&1&1&1&r&r&1&r&r&1\\
1&0&0&1&1&r&r&r&r&r&0&r&0&0&0&1&1&0&r&r&0\\
1&0&0&1&1&r&r&r&r&r&0&r&0&0&0&1&1&0&r&r&0\\
0&0&0&0&0&1&1&1&1&0&0&1&0&0&0&0&0&0&1&1&0\\
0&0&0&0&0&1&1&1&1&0&0&1&0&0&0&0&0&0&1&1&0\\
0&0&0&0&0&1&1&1&1&0&0&1&0&0&0&0&0&0&1&1&0\\
0&0&0&0&0&1&1&1&1&0&0&1&0&0&0&0&0&0&1&1&0\\
0&0&0&0&0&0&0&0&0&1&0&0&0&0&0&0&0&0&0&0&0\\
r&1&1&r&r&r&r&r&r&r&1&r&1&1&1&r&r&1&r&r&1\\
0&0&0&0&0&1&1&1&1&0&0&1&0&0&0&0&0&0&1&1&0\\
r&1&1&r&r&r&r&r&r&r&1&r&1&1&1&r&r&1&r&r&1\\
r&1&1&r&r&r&r&r&r&r&1&r&1&1&1&r&r&1&r&r&1\\
r&1&1&r&r&r&r&r&r&r&1&r&1&1&1&r&r&1&r&r&1\\
1&0&0&1&1&r&r&r&r&r&0&r&0&0&0&1&1&0&r&r&0\\
1&0&0&1&1&r&r&r&r&r&0&r&0&0&0&1&1&0&r&r&0\\
r&1&1&r&r&r&r&r&r&r&1&r&1&1&1&r&r&1&r&r&1\\
0&0&0&0&0&1&1&1&1&0&0&1&0&0&0&0&0&0&1&1&0\\
0&0&0&0&0&1&1&1&1&0&0&1&0&0&0&0&0&0&1&1&0\\
r&1&1&r&r&r&r&r&r&r&1&r&1&1&1&r&r&1&r&r&1 
\end{array} \right)
\end{array}$
\vspace{5cm}

$pk_2=
\begin{array}{cc}
    \left( \begin{array}{ccccccccccccccccccccc}
1&0&0&1&1&r&r&r&r&r&0&r&0&0&0&1&1&0&r&r&0\\
r&1&1&r&r&r&r&r&r&r&1&r&1&1&1&r&r&1&r&r&1\\
r&1&1&r&r&r&r&r&r&r&1&r&1&1&1&r&r&1&r&r&1\\
1&0&0&1&1&r&r&r&r&r&0&r&0&0&0&1&1&0&r&r&0\\
1&0&0&1&1&r&r&r&r&r&0&r&0&0&0&1&1&0&r&r&0\\
0&0&0&0&0&1&1&1&1&0&0&1&0&0&0&0&0&0&1&1&0\\
0&0&0&0&0&1&1&1&1&0&0&1&0&0&0&0&0&0&1&1&0\\
0&0&0&0&0&1&1&1&1&0&0&1&0&0&0&0&0&0&1&1&0\\
0&0&0&0&0&1&1&1&1&0&0&1&0&0&0&0&0&0&1&1&0\\
0&0&0&0&0&0&0&0&0&1&0&0&0&0&0&0&0&0&0&0&0\\
r&1&1&r&r&r&r&r&r&r&1&r&1&1&1&r&r&1&r&r&1\\
0&0&0&0&0&1&1&1&1&0&0&1&0&0&0&0&0&0&1&1&0\\
r&1&1&r&r&r&r&r&r&r&1&r&1&1&1&r&r&1&r&r&1\\
r&1&1&r&r&r&r&r&r&r&1&r&1&1&1&r&r&1&r&r&1\\
r&1&1&r&r&r&r&r&r&r&1&r&1&1&1&r&r&1&r&r&1\\
1&0&0&1&1&r&r&r&r&r&0&r&0&0&0&1&1&0&r&r&0\\
1&0&0&1&1&r&r&r&r&r&0&r&0&0&0&1&1&0&r&r&0\\
r&1&1&r&r&r&r&r&r&r&1&r&1&1&1&r&r&1&r&r&1\\
0&0&0&0&0&1&1&1&1&0&0&1&0&0&0&0&0&0&1&1&0\\
0&0&0&0&0&1&1&1&1&0&0&1&0&0&0&0&0&0&1&1&0\\
r&1&1&r&r&r&r&r&r&r&1&r&1&1&1&r&r&1&r&r&1 
\end{array} \right) & \left( \begin{array}{ccccccccccccccccccccc}
1&0&0&1&1&r&r&r&r&r&0&r&0&0&0&1&1&0&r&r&0\\
r&1&1&r&r&r&r&r&r&r&1&r&1&1&1&r&r&1&r&r&1\\
r&1&1&r&r&r&r&r&r&r&1&r&1&1&1&r&r&1&r&r&1\\
1&0&0&1&1&r&r&r&r&r&0&r&0&0&0&1&1&0&r&r&0\\
1&0&0&1&1&r&r&r&r&r&0&r&0&0&0&1&1&0&r&r&0\\
0&0&0&0&0&1&1&1&1&0&0&1&0&0&0&0&0&0&1&1&0\\
0&0&0&0&0&1&1&1&1&0&0&1&0&0&0&0&0&0&1&1&0\\
0&0&0&0&0&1&1&1&1&0&0&1&0&0&0&0&0&0&1&1&0\\
0&0&0&0&0&1&1&1&1&0&0&1&0&0&0&0&0&0&1&1&0\\
0&0&0&0&0&0&0&0&0&1&0&0&0&0&0&0&0&0&0&0&0\\
r&1&1&r&r&r&r&r&r&r&1&r&1&1&1&r&r&1&r&r&1\\
0&0&0&0&0&1&1&1&1&0&0&1&0&0&0&0&0&0&1&1&0\\
r&1&1&r&r&r&r&r&r&r&1&r&1&1&1&r&r&1&r&r&1\\
r&1&1&r&r&r&r&r&r&r&1&r&1&1&1&r&r&1&r&r&1\\
r&1&1&r&r&r&r&r&r&r&1&r&1&1&1&r&r&1&r&r&1\\
1&0&0&1&1&r&r&r&r&r&0&r&0&0&0&1&1&0&r&r&0\\
1&0&0&1&1&r&r&r&r&r&0&r&0&0&0&1&1&0&r&r&0\\
r&1&1&r&r&r&r&r&r&r&1&r&1&1&1&r&r&1&r&r&1\\
0&0&0&0&0&1&1&1&1&0&0&1&0&0&0&0&0&0&1&1&0\\
0&0&0&0&0&1&1&1&1&0&0&1&0&0&0&0&0&0&1&1&0\\
r&1&1&r&r&r&r&r&r&r&1&r&1&1&1&r&r&1&r&r&1 
\end{array} \right) \\
    \left( \begin{array}{ccccccccccccccccccccc}
1&0&0&1&1&r&r&r&r&r&0&r&0&0&0&1&1&0&r&r&0\\
r&1&1&r&r&r&r&r&r&r&1&r&1&1&1&r&r&1&r&r&1\\
r&1&1&r&r&r&r&r&r&r&1&r&1&1&1&r&r&1&r&r&1\\
1&0&0&1&1&r&r&r&r&r&0&r&0&0&0&1&1&0&r&r&0\\
1&0&0&1&1&r&r&r&r&r&0&r&0&0&0&1&1&0&r&r&0\\
0&0&0&0&0&1&1&1&1&0&0&1&0&0&0&0&0&0&1&1&0\\
0&0&0&0&0&1&1&1&1&0&0&1&0&0&0&0&0&0&1&1&0\\
0&0&0&0&0&1&1&1&1&0&0&1&0&0&0&0&0&0&1&1&0\\
0&0&0&0&0&1&1&1&1&0&0&1&0&0&0&0&0&0&1&1&0\\
0&0&0&0&0&0&0&0&0&1&0&0&0&0&0&0&0&0&0&0&0\\
r&1&1&r&r&r&r&r&r&r&1&r&1&1&1&r&r&1&r&r&1\\
0&0&0&0&0&1&1&1&1&0&0&1&0&0&0&0&0&0&1&1&0\\
r&1&1&r&r&r&r&r&r&r&1&r&1&1&1&r&r&1&r&r&1\\
r&1&1&r&r&r&r&r&r&r&1&r&1&1&1&r&r&1&r&r&1\\
r&1&1&r&r&r&r&r&r&r&1&r&1&1&1&r&r&1&r&r&1\\
1&0&0&1&1&r&r&r&r&r&0&r&0&0&0&1&1&0&r&r&0\\
1&0&0&1&1&r&r&r&r&r&0&r&0&0&0&1&1&0&r&r&0\\
r&1&1&r&r&r&r&r&r&r&1&r&1&1&1&r&r&1&r&r&1\\
0&0&0&0&0&1&1&1&1&0&0&1&0&0&0&0&0&0&1&1&0\\
0&0&0&0&0&1&1&1&1&0&0&1&0&0&0&0&0&0&1&1&0\\
r&1&1&r&r&r&r&r&r&r&1&r&1&1&1&r&r&1&r&r&1 
\end{array} \right) & \left( \begin{array}{ccccccccccccccccccccc}
1&0&0&1&1&r&r&r&r&r&0&r&0&0&0&1&1&0&r&r&0\\
r&1&1&r&r&r&r&r&r&r&1&r&1&1&1&r&r&1&r&r&1\\
r&1&1&r&r&r&r&r&r&r&1&r&1&1&1&r&r&1&r&r&1\\
1&0&0&1&1&r&r&r&r&r&0&r&0&0&0&1&1&0&r&r&0\\
1&0&0&1&1&r&r&r&r&r&0&r&0&0&0&1&1&0&r&r&0\\
0&0&0&0&0&1&1&1&1&0&0&1&0&0&0&0&0&0&1&1&0\\
0&0&0&0&0&1&1&1&1&0&0&1&0&0&0&0&0&0&1&1&0\\
0&0&0&0&0&1&1&1&1&0&0&1&0&0&0&0&0&0&1&1&0\\
0&0&0&0&0&1&1&1&1&0&0&1&0&0&0&0&0&0&1&1&0\\
0&0&0&0&0&0&0&0&0&1&0&0&0&0&0&0&0&0&0&0&0\\
r&1&1&r&r&r&r&r&r&r&1&r&1&1&1&r&r&1&r&r&1\\
0&0&0&0&0&1&1&1&1&0&0&1&0&0&0&0&0&0&1&1&0\\
r&1&1&r&r&r&r&r&r&r&1&r&1&1&1&r&r&1&r&r&1\\
r&1&1&r&r&r&r&r&r&r&1&r&1&1&1&r&r&1&r&r&1\\
r&1&1&r&r&r&r&r&r&r&1&r&1&1&1&r&r&1&r&r&1\\
1&0&0&1&1&r&r&r&r&r&0&r&0&0&0&1&1&0&r&r&0\\
1&0&0&1&1&r&r&r&r&r&0&r&0&0&0&1&1&0&r&r&0\\
r&1&1&r&r&r&r&r&r&r&1&r&1&1&1&r&r&1&r&r&1\\
0&0&0&0&0&1&1&1&1&0&0&1&0&0&0&0&0&0&1&1&0\\
0&0&0&0&0&1&1&1&1&0&0&1&0&0&0&0&0&0&1&1&0\\
r&1&1&r&r&r&r&r&r&r&1&r&1&1&1&r&r&1&r&r&1 
\end{array} \right)
\end{array}
$
\vspace{5cm}
and as common key

$
\begin{array}{cc}
    \left( \begin{array}{ccccccccccccccccccccc}
1&0&0&1&1&r&r&r&r&r&0&r&0&0&0&1&1&0&r&r&0\\
r&1&1&r&r&r&r&r&r&r&1&r&1&1&1&r&r&1&r&r&1\\
r&1&1&r&r&r&r&r&r&r&1&r&1&1&1&r&r&1&r&r&1\\
1&0&0&1&1&r&r&r&r&r&0&r&0&0&0&1&1&0&r&r&0\\
1&0&0&1&1&r&r&r&r&r&0&r&0&0&0&1&1&0&r&r&0\\
0&0&0&0&0&1&1&1&1&0&0&1&0&0&0&0&0&0&1&1&0\\
0&0&0&0&0&1&1&1&1&0&0&1&0&0&0&0&0&0&1&1&0\\
0&0&0&0&0&1&1&1&1&0&0&1&0&0&0&0&0&0&1&1&0\\
0&0&0&0&0&1&1&1&1&0&0&1&0&0&0&0&0&0&1&1&0\\
0&0&0&0&0&0&0&0&0&1&0&0&0&0&0&0&0&0&0&0&0\\
r&1&1&r&r&r&r&r&r&r&1&r&1&1&1&r&r&1&r&r&1\\
0&0&0&0&0&1&1&1&1&0&0&1&0&0&0&0&0&0&1&1&0\\
r&1&1&r&r&r&r&r&r&r&1&r&1&1&1&r&r&1&r&r&1\\
r&1&1&r&r&r&r&r&r&r&1&r&1&1&1&r&r&1&r&r&1\\
r&1&1&r&r&r&r&r&r&r&1&r&1&1&1&r&r&1&r&r&1\\
1&0&0&1&1&r&r&r&r&r&0&r&0&0&0&1&1&0&r&r&0\\
1&0&0&1&1&r&r&r&r&r&0&r&0&0&0&1&1&0&r&r&0\\
r&1&1&r&r&r&r&r&r&r&1&r&1&1&1&r&r&1&r&r&1\\
0&0&0&0&0&1&1&1&1&0&0&1&0&0&0&0&0&0&1&1&0\\
0&0&0&0&0&1&1&1&1&0&0&1&0&0&0&0&0&0&1&1&0\\
r&1&1&r&r&r&r&r&r&r&1&r&1&1&1&r&r&1&r&r&1 
\end{array} \right) & \left( \begin{array}{ccccccccccccccccccccc}
1&0&0&1&1&r&r&r&r&r&0&r&0&0&0&1&1&0&r&r&0\\
r&1&1&r&r&r&r&r&r&r&1&r&1&1&1&r&r&1&r&r&1\\
r&1&1&r&r&r&r&r&r&r&1&r&1&1&1&r&r&1&r&r&1\\
1&0&0&1&1&r&r&r&r&r&0&r&0&0&0&1&1&0&r&r&0\\
1&0&0&1&1&r&r&r&r&r&0&r&0&0&0&1&1&0&r&r&0\\
0&0&0&0&0&1&1&1&1&0&0&1&0&0&0&0&0&0&1&1&0\\
0&0&0&0&0&1&1&1&1&0&0&1&0&0&0&0&0&0&1&1&0\\
0&0&0&0&0&1&1&1&1&0&0&1&0&0&0&0&0&0&1&1&0\\
0&0&0&0&0&1&1&1&1&0&0&1&0&0&0&0&0&0&1&1&0\\
0&0&0&0&0&0&0&0&0&1&0&0&0&0&0&0&0&0&0&0&0\\
r&1&1&r&r&r&r&r&r&r&1&r&1&1&1&r&r&1&r&r&1\\
0&0&0&0&0&1&1&1&1&0&0&1&0&0&0&0&0&0&1&1&0\\
r&1&1&r&r&r&r&r&r&r&1&r&1&1&1&r&r&1&r&r&1\\
r&1&1&r&r&r&r&r&r&r&1&r&1&1&1&r&r&1&r&r&1\\
r&1&1&r&r&r&r&r&r&r&1&r&1&1&1&r&r&1&r&r&1\\
1&0&0&1&1&r&r&r&r&r&0&r&0&0&0&1&1&0&r&r&0\\
1&0&0&1&1&r&r&r&r&r&0&r&0&0&0&1&1&0&r&r&0\\
r&1&1&r&r&r&r&r&r&r&1&r&1&1&1&r&r&1&r&r&1\\
0&0&0&0&0&1&1&1&1&0&0&1&0&0&0&0&0&0&1&1&0\\
0&0&0&0&0&1&1&1&1&0&0&1&0&0&0&0&0&0&1&1&0\\
r&1&1&r&r&r&r&r&r&r&1&r&1&1&1&r&r&1&r&r&1 
\end{array} \right) \\
   \left( \begin{array}{ccccccccccccccccccccc}
1&0&0&1&1&r&r&r&r&r&0&r&0&0&0&1&1&0&r&r&0\\
r&1&1&r&r&r&r&r&r&r&1&r&1&1&1&r&r&1&r&r&1\\
r&1&1&r&r&r&r&r&r&r&1&r&1&1&1&r&r&1&r&r&1\\
1&0&0&1&1&r&r&r&r&r&0&r&0&0&0&1&1&0&r&r&0\\
1&0&0&1&1&r&r&r&r&r&0&r&0&0&0&1&1&0&r&r&0\\
0&0&0&0&0&1&1&1&1&0&0&1&0&0&0&0&0&0&1&1&0\\
0&0&0&0&0&1&1&1&1&0&0&1&0&0&0&0&0&0&1&1&0\\
0&0&0&0&0&1&1&1&1&0&0&1&0&0&0&0&0&0&1&1&0\\
0&0&0&0&0&1&1&1&1&0&0&1&0&0&0&0&0&0&1&1&0\\
0&0&0&0&0&0&0&0&0&1&0&0&0&0&0&0&0&0&0&0&0\\
r&1&1&r&r&r&r&r&r&r&1&r&1&1&1&r&r&1&r&r&1\\
0&0&0&0&0&1&1&1&1&0&0&1&0&0&0&0&0&0&1&1&0\\
r&1&1&r&r&r&r&r&r&r&1&r&1&1&1&r&r&1&r&r&1\\
r&1&1&r&r&r&r&r&r&r&1&r&1&1&1&r&r&1&r&r&1\\
r&1&1&r&r&r&r&r&r&r&1&r&1&1&1&r&r&1&r&r&1\\
1&0&0&1&1&r&r&r&r&r&0&r&0&0&0&1&1&0&r&r&0\\
1&0&0&1&1&r&r&r&r&r&0&r&0&0&0&1&1&0&r&r&0\\
r&1&1&r&r&r&r&r&r&r&1&r&1&1&1&r&r&1&r&r&1\\
0&0&0&0&0&1&1&1&1&0&0&1&0&0&0&0&0&0&1&1&0\\
0&0&0&0&0&1&1&1&1&0&0&1&0&0&0&0&0&0&1&1&0\\
r&1&1&r&r&r&r&r&r&r&1&r&1&1&1&r&r&1&r&r&1 
\end{array} \right)  & \left( \begin{array}{ccccccccccccccccccccc}
1&0&0&1&1&r&r&r&r&r&0&r&0&0&0&1&1&0&r&r&0\\
r&1&1&r&r&r&r&r&r&r&1&r&1&1&1&r&r&1&r&r&1\\
r&1&1&r&r&r&r&r&r&r&1&r&1&1&1&r&r&1&r&r&1\\
1&0&0&1&1&r&r&r&r&r&0&r&0&0&0&1&1&0&r&r&0\\
1&0&0&1&1&r&r&r&r&r&0&r&0&0&0&1&1&0&r&r&0\\
0&0&0&0&0&1&1&1&1&0&0&1&0&0&0&0&0&0&1&1&0\\
0&0&0&0&0&1&1&1&1&0&0&1&0&0&0&0&0&0&1&1&0\\
0&0&0&0&0&1&1&1&1&0&0&1&0&0&0&0&0&0&1&1&0\\
0&0&0&0&0&1&1&1&1&0&0&1&0&0&0&0&0&0&1&1&0\\
0&0&0&0&0&0&0&0&0&1&0&0&0&0&0&0&0&0&0&0&0\\
r&1&1&r&r&r&r&r&r&r&1&r&1&1&1&r&r&1&r&r&1\\
0&0&0&0&0&1&1&1&1&0&0&1&0&0&0&0&0&0&1&1&0\\
r&1&1&r&r&r&r&r&r&r&1&r&1&1&1&r&r&1&r&r&1\\
r&1&1&r&r&r&r&r&r&r&1&r&1&1&1&r&r&1&r&r&1\\
r&1&1&r&r&r&r&r&r&r&1&r&1&1&1&r&r&1&r&r&1\\
1&0&0&1&1&r&r&r&r&r&0&r&0&0&0&1&1&0&r&r&0\\
1&0&0&1&1&r&r&r&r&r&0&r&0&0&0&1&1&0&r&r&0\\
r&1&1&r&r&r&r&r&r&r&1&r&1&1&1&r&r&1&r&r&1\\
0&0&0&0&0&1&1&1&1&0&0&1&0&0&0&0&0&0&1&1&0\\
0&0&0&0&0&1&1&1&1&0&0&1&0&0&0&0&0&0&1&1&0\\
r&1&1&r&r&r&r&r&r&r&1&r&1&1&1&r&r&1&r&r&1 
\end{array} \right)
\end{array} $

\section{Key Exchange Protocol and Its Security}

We begin by analyzing the computational cost of the key exchange.

\subsection*{Setup Cost}

\begin{theorem}
Let $M \in \text{Mat}_m(R)$ be a matrix over the ring $R$, with polynomials bounded by degree $h \in \mathbb{N}$, and let there be $n$ basis elements. Then, the setup cost of the protocol is at most $O(nH)$ matrix operations, or equivalently $O(nhm^3)$ operations in $R$.
\end{theorem}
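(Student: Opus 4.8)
The plan is to decompose the setup into its elementary operations and count them directly. The data that must actually be generated during setup is the public vector $v = (M_0, \ldots, M_{n-1}) \in C_R[M]^n$, since the semiring $R$ and the matrix $M$ are agreed upon in advance. Hence the cost is governed by the evaluation of these $n$ elements of $C_R[M]$, each of which is a polynomial in $M$ with coefficients in the center $C_R$ and degree at most $h$.

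First I would bound the cost of evaluating a single basis element $M_i = \sum_{k=0}^{h} c_k M^k$. Using Horner's scheme, $M_i = c_0 + M(c_1 + M(c_2 + \cdots + M c_h))$, the evaluation requires $h$ multiplications by the matrix $M$, together with $h$ scalar multiplications of a matrix by an element of $C_R$ and $h$ matrix additions. Since a scalar multiplication and a matrix addition each cost $O(m^2)$ operations in $R$, while one full matrix product computed by the schoolbook method costs $O(m^3)$, the matrix multiplications dominate; thus a single evaluation costs $O(h)$ matrix operations, equivalently $O(hm^3)$ operations in $R$.

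Summing over all $n$ basis elements then yields $O(nh)$ matrix operations, or $O(nhm^3)$ operations in $R$, which is the asserted bound (here the symbol $H$ in the statement is the degree bound $h$). I would then note that the remaining setup work — fixing $R$ and $M$, choosing the integer partition of $m$, building the block matrix, and conjugating by a generalized permutation matrix — amounts to a bounded number of matrix products, hence $O(m^3)$ operations in $R$, which is absorbed into the estimate and contributes only a lower-order term.

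The step requiring the most care is the bookkeeping that separates a "matrix operation" from an operation in $R$, and the verification that the additive and scalar work is genuinely dominated by the $O(m^3)$ cost of the naive matrix product over the semiring; no fast matrix multiplication can be invoked, since $R$ need not admit subtraction. A secondary point is to confirm that the degree bound $h$ applies uniformly across all $n$ basis polynomials, so that the per-element cost is identical and the total is precisely the product $n \cdot h$ of the two governing parameters.
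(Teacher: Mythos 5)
Your proof is correct, but it takes a different route from the paper's. You evaluate each basis element $M_i=\sum_{k=0}^{h}c_kM^k$ by Horner's scheme, costing $h$ matrix products of $O(m^3)$ semiring operations each, so $O(h)$ matrix operations per element and $O(nh)$ in total --- which reproduces exactly the bound $O(nH)=O(nhm^3)$ asserted in the statement, and does so uniformly for arbitrary dense polynomials of degree at most $h$. The paper instead computes the needed powers of $M$ by square-and-multiply, charging $O(2\log_2(h)\,m^3)$ per power, argues the additions ($O(Hm^2)$) are negligible, and concludes with the sharper total $O(n\log_2(h)\,m^3)$, which is stronger than what the theorem claims. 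What each approach buys: the paper's repeated squaring gives a logarithmic dependence on $h$, but its per-element accounting is really only valid when a basis element involves a single power (or a bounded number of powers) of $M$ --- for a dense degree-$h$ polynomial, evaluating each of the $h$ required powers independently by repeated squaring would cost $O(h\log_2(h)\,m^3)$ per element, worse than your bound, while your Horner evaluation (or equivalently computing $M,M^2,\dots,M^h$ sequentially) handles the dense case at $O(hm^3)$ with no case analysis. Your remarks that Horner requires no subtraction (so it is legitimate over a semiring, where fast matrix multiplication \`a la Strassen is unavailable) and that the one-time work of building and conjugating $M$ is a lower-order $O(m^3)$ term are points the paper passes over silently; the only caveat is that your proof, as stated, does not recover the paper's tighter $\log_2(h)$ bound in the sparse (monomial) case.
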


\begin{proof}
For each basis element, we must compute up to $h$ powers of $M$. Each power can be computed using matrix multiplications of size $m \times m$, with a cost of $O(2\log_2(h) m^3)$ in $R$. After computing the powers, we need to sum at most $H$ matrices, which contributes at most an additional $O(Hm^2)$ operations; however, since matrix addition is negligible compared to multiplication, the overall cost per basis element remains $O(2\log_2(h) m^3)$. Repeating this for all $n$ basis elements, the total setup cost becomes $O(n \log_2(h) m^3)$ operations in $R$.
\end{proof}

\subsection*{Post-Setup Computational Cost}

\begin{theorem}
Let $A \in \text{Circ}_n([0,k])$ be a circulant matrix with entries in the interval $[0,k]$. Then, the cost of computing $A(M_i)_{i=1}^n$ is $O(n(2\log_2 k + n)m^3)$ operations in $R$.
\end{theorem}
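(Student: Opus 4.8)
The plan is to split the computation of $Av$ into two independent tasks, bound each by a number of $m\times m$ matrix multiplications, and then multiply by the $O(m^3)$ cost of one such multiplication in $R$. Recalling the action from the earlier theorem, the $i$-th entry of $Av$ is $(Av)_i=\prod_{j=1}^{n} M_j^{\,a_{j-i}}$, with indices read modulo $n$ and every exponent $a_{j-i}$ lying in $[0,k]$. So I would first produce the matrix powers that occur, and then assemble the $n$ output products from them.

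For the exponentiation task, I would fix a base $M_j$ and compute its repeated-squaring ladder $M_j,\,M_j^{2},\,M_j^{4},\dots,M_j^{2^{\lfloor\log_2 k\rfloor}}$, costing $\lfloor\log_2 k\rfloor$ squarings; reading off the binary expansion of any target exponent $\le k$ and multiplying the selected ladder terms costs a further $\le\log_2 k$ multiplications, so one power is obtained in at most $2\log_2 k$ matrix multiplications, matching the accounting used in the setup-cost theorem. Carrying this out once per base is meant to give the $2n\log_2 k$ term. For the product task, each of the $n$ entries $(Av)_i$ is a product of $n$ already-available matrices and so needs $n-1$ multiplications, for $O(n^2)$ multiplications overall. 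Adding the two contributions yields $O(2n\log_2 k+n^2)$ matrix multiplications, and since each costs $O(m^3)$ operations in $R$, the total is $O\big(n(2\log_2 k+n)m^3\big)$, as claimed.

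The step I expect to be the main obstacle is making the exponentiation count come out as $2n\log_2 k$ rather than $2n^2\log_2 k$. The difficulty is that, by the circulant structure, as $i$ ranges over all residues a fixed base $M_j$ is raised in turn to every exponent $a_0,\dots,a_{n-1}$, so a naive bound computes $n$ distinct powers per base at cost $2\log_2 k$ each, inflating the estimate by a factor of $n$. To reach the stated bound I must argue that the squaring ladders are genuinely shared across all $n$ output entries (they depend only on the base, not on the entry) and that the remaining binary-recombination multiplications can be amortised against the $O(n^2)$ product step, so that the truly new exponentiation work is only $O(n\log_2 k)$. Pinning down this amortisation — equivalently, showing the recombination multiplications do not contribute a second factor of $n$ — is the delicate point, and it is where I would concentrate the argument, appealing if necessary to reuse of repeated subproducts forced by the shared exponent set and by the circulant shift relating consecutive entries.
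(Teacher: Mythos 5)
You have reconstructed the statement's intended accounting faithfully: your two-phase split (shared square-and-multiply ladders, then entry-wise products) and the final arithmetic $O\bigl((2n\log_2 k + n^2)m^3\bigr)$ mirror the paper's proof, which charges $O(2\log_2 k)$ multiplications for the powers plus $O(n)$ multiplications for the product in each of the $n$ components. But the obstacle you isolate in your last paragraph is a genuine gap, and your proposal does not close it --- nor, it should be said, does the paper: its proof silently charges a \emph{single} exponentiation per component, although the $i$-th entry $\prod_{j} M_j^{a_{j-i}}$ requires $n$ of them. Concretely, each pair $(j,t)$ of base and exponent index occurs in exactly one entry (namely $i \equiv j-t \bmod n$), so all $n^2$ powers $M_j^{a_t}$ must be produced and none is reused between entries. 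Sharing the ladder of each base, as you suggest, does account for the $n\lfloor\log_2 k\rfloor$ squarings, but the binary recombination costs up to $\lfloor\log_2 k\rfloor$ further multiplications for each pair $(j,t)$ --- it depends on the exponent, not only on the base --- giving $\Theta(n^2\log_2 k)$ multiplications for generic keys. This cannot be amortised against the $O(n^2)$ product phase, whose budget carries no $\log_2 k$ factor at all.

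Moreover, the reuse you appeal to cannot rescue the stated bound even in principle. Producing the $n$ powers $M_j^{a_0},\dots,M_j^{a_{n-1}}$ of one base is an addition-sequence problem, and a standard counting argument for addition chains shows that a straight-line program of $L$ multiplications realizes at most $2^{O(L\log L)}$ exponent tuples; hitting a generic tuple in $[0,k]^n$ therefore forces $L=\Omega\bigl(n\log_2 k/\log(n\log_2 k)\bigr)$ per base, i.e.\ $\Omega\bigl(n^2\log_2 k/\log(n\log_2 k)\bigr)$ overall --- asymptotically larger than the $O(n\log_2 k + n^2)$ multiplications the statement allows whenever $k$ has cryptographic size. (Straus--Shamir simultaneous exponentiation, which the commutativity of $C_R[M]$ does permit, saves only such logarithmic-in-the-logarithm factors, never a factor of $n$.) So the ``delicate point'' you planned to concentrate on is not merely delicate: the amortisation claim is false for generic exponents. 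What is actually provable for the algorithm you and the paper both describe is $O\bigl(n(2n\log_2 k + n)m^3\bigr)=O(n^2\log_2 k\,m^3)$, and a correct write-up should either prove that weaker bound or restrict to regimes where the two coincide, e.g.\ bounded $n$, as in the paper's own $4\times 4$ example.
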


\begin{proof}
Each matrix multiplication or exponentiation involves $m^3$ operations. To compute powers up to $k$, we require $O(2\log_2 k)$ multiplications, resulting in $O(2\log_2 k \cdot m^3)$ operations in $R$. After that, we must multiply the resulting matrices, contributing an additional $O(nm^3)$ operations. Since this procedure is repeated for each of the $n$ components, the total cost is $O(n(2\log_2 k + n)m^3)$ operations in $R$.
\end{proof}

Now, we will see the security of the previous protocol against known attacks.
\subsection{Brute force and random attack}
An attacker could try a brute-force or random attack against the protocol. If $m$ is an upper bound to the coefficient of the circulant matrix, an attacker must try $m^n$ different matrix in a brute force attack. To avoid this, it is necessary that $m$ is large enough so $m^n$ is unfeasseble. 

However, this is not sufficient, as if the set $Pkey=\{C\in Circ_n(\mathbb{N}); Cv=Av\}$ is too large, the brute force or random attack could find one instance with ease. Computational experiments shows that for matrix of form \ref{EjemploCalculado} that a random attack is not effective against it. In addition, we present a result that ensure us that under certain conditions, our private key is unique.

\begin{theorem}
    If $n=1$, and  $A=a_1 \leq pord(M_1) -1$, then the solution of $Xv=Av$ is only $X=A$
\end{theorem}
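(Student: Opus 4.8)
The plan is to reduce the equation to the power sequence of the single matrix $M_1$ and then read off injectivity directly from the definition of the preperiod. For $n=1$ a matrix in $Circ_1(\mathbb{N})$ is a single nonnegative integer and the public vector $v$ is the single matrix $M_1$; evaluating the action of the earlier theorem with $n=1$ gives $Xv = M_1^{X}$ and $Av = M_1^{A}$. Hence $Xv=Av$ is equivalent to $M_1^{X}=M_1^{A}$, and the statement becomes the assertion that the exponent map $k\mapsto M_1^{k}$ attains the value $M_1^{A}$ at the single point $k=A$ whenever $A\le \operatorname{pord}(M_1)-1$.

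First I would record the structural facts about the sequence $a_k = M_1^{k}$. Since $R$ is finite, $a_k$ lives in the finite set $\mathrm{Mat}_m(R)$, and since $a_{k+1}=M_1\cdot a_k$ the sequence obeys the functional hypothesis $a_n=a_m\Rightarrow a_{n+1}=a_{m+1}$ that underlies the definitions of $\operatorname{ord}$ and $\operatorname{pr}$. I would then prove the key auxiliary fact: every index $i$ lying in the pre-periodic part is non-recurring, i.e. $a_i\neq a_k$ for all $k>i$. Indeed, if $a_i=a_k$ for some $k>i$ with $i$ still in the pre-periodic range, the functional property propagates the equality forward ($a_{i+t}=a_{k+t}$ for all $t\ge 0$), which forces the last pre-periodic term to reappear at a strictly later index and contradicts the maximality built into the definition of the preperiod. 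Pairwise distinctness of the transient terms is then an immediate corollary.

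With this in hand the conclusion is short. For $A\le \operatorname{pord}(M_1)-1$ the term $M_1^{A}$ lies strictly inside the non-recurring segment, so no exponent $X>A$ can satisfy $M_1^{X}=M_1^{A}$; and if $X<A$ then $a_A=a_X$ would make $a_X$ recur at index $A$, which is likewise excluded by the auxiliary fact. Both cases therefore force $X=A$, which is exactly the claim. The main obstacle is purely the bookkeeping around the definition of the preperiod and the precise indexing: one must cleanly separate the "later collision" and "earlier collision" cases and verify that the bound $A\le \operatorname{pord}(M_1)-1$ really does place $A$ strictly within the non-recurring part so that both arguments apply. It is worth noting that congruence-simplicity of $R$ is not used anywhere here; only finiteness of $R$, which guarantees eventual periodicity of $\{M_1^{k}\}$, is needed.
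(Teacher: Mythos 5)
Your proof is correct, and it is worth noting at the outset that the paper states this theorem with no proof at all, so your write-up supplies an argument the paper omits rather than paralleling one. Your two steps are exactly what is needed. First, for $n=1$ the action of the earlier theorem does collapse to $Xv=M_1^{X}$, so the claim is injectivity of $k\mapsto M_1^{k}$ on the initial segment. Second, your auxiliary fact -- every index $i\le\operatorname{pr}(M_1)$ is non-recurring -- is the right bridge from the paper's slightly unusual definition of the preperiod (the \emph{largest} $m$ with $a_k\ne a_m$ for all $k>m$) to the pairwise distinctness you need: propagating a hypothetical collision $a_i=a_k$ ($k>i$) forward by $t=\operatorname{pr}(M_1)-i$ steps via the deterministic recurrence $a_{j+1}=M_1\,a_j$ makes the term $a_{\operatorname{pr}(M_1)}$ reappear at the strictly later index $\operatorname{pr}(M_1)+(k-i)$, contradicting maximality. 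Your case split is then sound: the case $X>A$ is excluded by the fact at $i=A$, and the case $X<A$ by the fact at $i=X$, since $X<A\le\operatorname{pr}(M_1)$ keeps $X$ in the non-recurring segment. Two of your side observations deserve emphasis: the hypothesis $A\le \operatorname{pr}(M_1)-1$ (reading the paper's ``$pord$'' as the preperiod, which is the only reading under which the statement is true -- with ``$\operatorname{ord}$'' the exponent $A$ could land in the cycle and $X=A+\operatorname{per}(M_1)$ would be a second solution) is indeed slightly stronger than necessary, as your argument works verbatim under $A\le\operatorname{pr}(M_1)$; and congruence-simplicity of $R$ plays no role, only finiteness, which guarantees the sequence of powers is eventually periodic.
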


\begin{theorem}
    Let $n\in \mathbb{N}$. Let $v=\{M^{b_i}\}_{i=1}^{n}$ such that $B=Cir[b_1,\cdots, b_n]$ with $det(B)\not = 0$, and $A$ such that $\sum_i a_i b_{i+k} \leq pord(M)-1 \forall k=0\cdots, n-1$, then the solution of $Xv=Av$ is only $X=A$
\end{theorem}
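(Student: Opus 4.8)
The plan is to peel off the action component by component, turn the resulting equalities of powers of $M$ into equalities of their exponents, and finally read those as a single invertible linear system in the entries of the unknown circulant. Throughout I index by $0,\dots,n-1$ with all subscripts taken modulo $n$, matching the action definition.

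First I would write $X=Circ(x_0,\dots,x_{n-1})$ and $A=Circ(a_0,\dots,a_{n-1})$ and use $v_j=M^{b_j}$ to compute each component of the action. By the definition of the action,
\[
(Av)_i=\prod_{j}\bigl(M^{b_j}\bigr)^{a_{j-i}}=M^{\,e_i(A)},\qquad e_i(A):=\sum_{l}a_l\,b_{l+i},
\]
and identically $(Xv)_i=M^{\,e_i(X)}$ with $e_i(X)=\sum_l x_l\,b_{l+i}$. In this notation the hypothesis is exactly $e_k(A)\le pord(M)-1$ for every $k$.

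The crux is the passage from $Xv=Av$, i.e.\ $M^{e_i(X)}=M^{e_i(A)}$ for all $i$, to the equality of exponents $e_i(X)=e_i(A)$. This is the only point where the arithmetic of $M$ (rather than pure linear algebra) enters, and it is where I expect the real work to lie. The bound $e_i(A)\le pord(M)-1$ places each exponent $e_i(A)$ in the range on which $k\mapsto M^{k}$ is injective and on which a value never recurs later, namely the transient (pre-period) regime of the orbit $(M^{k})_{k\ge 0}$. Hence $M^{e_i(A)}$ is attained for a unique exponent, so $M^{e_i(X)}=M^{e_i(A)}$ forces $e_i(X)=e_i(A)$ no matter how large $e_i(X)$ is. It is essential to record that the hypothesis cannot be dropped: were $e_i(A)$ allowed to fall in the periodic part, a different exponent lying in the same cycle would produce the same power and the conclusion $X=A$ could fail.

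Finally I would solve the linear system. The equalities $e_i(A)=e_i(X)$ for $i=0,\dots,n-1$ say $\sum_l (a_l-x_l)\,b_{l+i}=0$ for all $i$, i.e.\ $M_B(\alpha-\chi)=0$, where $\alpha=(a_l)_l$, $\chi=(x_l)_l$, and $M_B$ is the matrix with entries $(M_B)_{il}=b_{(l+i)\bmod n}$, so that $e_i(A)=(M_B\alpha)_i$. A direct check shows $M_B$ is $B=Circ(b_0,\dots,b_{n-1})$ with its columns permuted by $l\mapsto(-l)\bmod n$, a single permutation of columns, whence $\det M_B=\pm\det B\neq 0$ by hypothesis. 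Thus $M_B$ is invertible over $\mathbb{Q}$, which yields $\alpha=\chi$, that is $X=A$. The only remaining verification — that permuting the columns of $B$ by $l\mapsto(-l)\bmod n$ alters the determinant merely by a sign — is routine.
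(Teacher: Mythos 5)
The paper states this theorem without any proof (both uniqueness results in the brute-force subsection are bare statements), so there is no official argument to compare against; your proposal supplies exactly the missing content, and it is correct. Your reduction of $Xv=Av$ to $M^{e_i(X)}=M^{e_i(A)}$ with $e_i(A)=\sum_l a_l b_{l+i}$ matches the paper's action $(Cv)_i=\prod_j v_j^{a_{j-i}}$, and your linear-algebra step is sound: the coefficient matrix $(b_{l+i})_{i,l}$ is $Circ(b_0,\dots,b_{n-1})$ with columns permuted by $l\mapsto -l \bmod n$, so its determinant is $\pm\det B\neq 0$ and $\alpha=\chi$ follows over $\mathbb{Q}$, hence over $\mathbb{N}$. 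Two remarks. First, the crux step depends on reading the paper's undefined symbol $pord(M)$ as the preperiod $\operatorname{pr}(M)$ — this is the only reading under which the theorem is true, since bounding $e_i(A)$ by $\operatorname{ord}(M)-1$ alone would not exclude an unbounded $e_i(X)$ landing in the same cycle; you correctly flag this with your remark that the hypothesis cannot be dropped. Second, your assertion that a transient value is attained by a unique exponent deserves one line of justification from the shift property $M^j=M^k\Rightarrow M^{j+1}=M^{k+1}$: if $e_i(X)\neq e_i(A)$ but $M^{e_i(X)}=M^{e_i(A)}$ with $e_i(A)\leq\operatorname{pr}(M)$, shifting both exponents by $\operatorname{pr}(M)-e_i(A)$ produces a later recurrence of $M^{\operatorname{pr}(M)}$, contradicting the paper's definition of the preperiod. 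With that line inserted, your proof is complete.
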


\subsection{Pohlig-Hellman type attacks }
First, we recall the following well-known result:

\begin{theorem}
    Let $n \in \mathbb{N}$, then $U(\text{Mat}_n(\mathbb{N}))$ are the permutation matrices. 
\end{theorem}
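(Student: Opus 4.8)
The plan is to prove both inclusions, the easy one first. Every permutation matrix $P_\sigma$ is a unit because $P_\sigma P_{\sigma^{-1}} = P_{\sigma^{-1}} P_\sigma = I$, and $P_{\sigma^{-1}}$ again has entries in $\mathbb{N}$; so $P_\sigma \in U(\mathrm{Mat}_n(\mathbb{N}))$. For the reverse inclusion I would exploit the two arithmetic features of the semiring $\mathbb{N}$ that drive the whole argument: it is \emph{zerosumfree} (a finite sum of elements equals $0$ only if every summand is $0$) and its only multiplicatively invertible element is $1$ (indeed $xy = 1$ forces $x = y = 1$ in $\mathbb{N}$). Note that the earlier theorem characterizing invertible matrices as generalized permutation matrices is stated for \emph{finite} semirings with irreducible additive semigroup, so it does not apply to $\mathbb{N}$ directly; the argument below is a self-contained replacement, and because $1$ is the only unit of $\mathbb{N}$, the generalized permutation matrices here coincide with the ordinary permutation matrices.

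So let $A = (a_{ij})$ be a unit with inverse $B = (b_{ij})$, both over $\mathbb{N}$, and expand the identity $AB = I$ entrywise as $\sum_k a_{ik} b_{kj} = \delta_{ij}$. First I would read off the diagonal equations $\sum_k a_{ik} b_{ki} = 1$: by zerosumfreeness together with $xy=1\Rightarrow x=y=1$, for each $i$ exactly one index, call it $\sigma(i)$, satisfies $a_{i\sigma(i)} = b_{\sigma(i)i} = 1$, while every other product $a_{ik}b_{ki}$ vanishes. This produces a function $\sigma\colon\{1,\dots,n\}\to\{1,\dots,n\}$, and the bulk of the work is to show it is a bijection and that it pins down $A$ completely.

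Next I would turn to the off-diagonal equations $\sum_k a_{ik} b_{ki'} = 0$ for $i \neq i'$; zerosumfreeness forces every term $a_{ik}b_{ki'}$ to be $0$. Injectivity of $\sigma$ follows by contradiction: if $\sigma(i) = \sigma(i') = k$ with $i \neq i'$, then $b_{ki} = b_{ki'} = 1$, so the $k$-th term of $\sum_m a_{im}b_{mi'}$ equals $a_{ik}b_{ki'} = 1\cdot 1 = 1 \neq 0$, contradicting the vanishing of that off-diagonal sum. Hence $\sigma$ is a permutation of $\{1,\dots,n\}$. Finally, fixing $i$ and letting $i'$ range over all indices different from $i$, the choice $m = \sigma(i')$ in the same off-diagonal equation gives $a_{i\sigma(i')}\cdot b_{\sigma(i')i'} = a_{i\sigma(i')} = 0$; as $i'$ varies, $\sigma(i')$ sweeps out every column index except $\sigma(i)$, so $a_{ij} = 0$ for all $j \neq \sigma(i)$. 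Combined with $a_{i\sigma(i)} = 1$ this shows $A = P_\sigma$, completing the proof.

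I expect no serious obstacle: the whole argument is combinatorial and rests entirely on the two elementary properties of $\mathbb{N}$ highlighted above. The one step requiring care is the injectivity of $\sigma$ — it is the only place where an off-diagonal equation is used to derive a contradiction rather than a vanishing entry — and one must resist the temptation to invoke determinants or ranks, which are unavailable over a semiring lacking additive inverses.
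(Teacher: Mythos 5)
Your proof is correct and complete. There is, however, no proof in the paper to compare it against: the statement is introduced there with ``we recall the following well-known result'' and used as a black box, so your argument supplies exactly what the paper omits. Your two driving observations — that $\mathbb{N}$ is zerosumfree and that $1$ is its only multiplicative unit — are the right ones, and your preliminary remark is well taken: the Kendziorra--Schmidt--Zumbr\"agel theorem quoted earlier in the paper characterizes invertible matrices only over \emph{finite} semirings with irreducible additive semigroup, so it cannot be invoked for the infinite semiring $\mathbb{N}$, and a self-contained argument is genuinely needed at this point. Two minor observations. First, your diagonal step uses slightly more than zerosumfreeness: you need that a sum of naturals equal to $1$ has exactly one nonzero summand, i.e., that $x+y=1$ forces $\{x,y\}=\{0,1\}$ — trivially true in $\mathbb{N}$, but it is a property of the ordering rather than of zerosumfreeness per se, so it deserves a word. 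Second, it is worth noting explicitly that your argument consumes only the single identity $AB=I$ (all diagonal and off-diagonal equations you use come from it), so you have in fact proved the slightly stronger statement that every matrix over $\mathbb{N}$ with a right inverse is a permutation matrix; the two-sided hypothesis in the definition of $U(\mathrm{Mat}_n(\mathbb{N}))$ is never needed.
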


As a corollary, we have:

\begin{theorem}
    Let $n \in \mathbb{N}$, then $U(\text{Circ}_n(\mathbb{N}))$ are the permutation matrices. 
\end{theorem}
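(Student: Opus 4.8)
The plan is to derive this directly as a corollary of the preceding theorem, which identifies $U(\mathrm{Mat}_n(\mathbb{N}))$ with the permutation matrices. The key observation is that $\mathrm{Circ}_n(\mathbb{N})$ is a subsemiring of $\mathrm{Mat}_n(\mathbb{N})$ sharing the same identity $I$, so that an inverse computed inside $\mathrm{Circ}_n(\mathbb{N})$ automatically witnesses invertibility inside $\mathrm{Mat}_n(\mathbb{N})$. Consequently $U(\mathrm{Circ}_n(\mathbb{N})) \subseteq \mathrm{Circ}_n(\mathbb{N}) \cap U(\mathrm{Mat}_n(\mathbb{N}))$, and the statement should be read as identifying these units with the \emph{circulant} permutation matrices, equivalently the cyclic shifts.

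First I would prove the inclusion from units to circulant permutation matrices. Let $C \in U(\mathrm{Circ}_n(\mathbb{N}))$ with inverse $D \in \mathrm{Circ}_n(\mathbb{N})$. Since $CD = I$ already holds in $\mathrm{Mat}_n(\mathbb{N})$, the previous theorem forces $C$ to be a permutation matrix; being simultaneously circulant, $C$ is a circulant permutation matrix. To make this explicit I would note that a circulant matrix $\mathrm{Circ}(c_0,\dots,c_{n-1})$ is a permutation matrix precisely when its generating vector has a single entry equal to $1$ and all others equal to $0$: the circulant structure then guarantees that every row and every column contains exactly one $1$. Writing $P = \mathrm{Circ}(0,1,0,\dots,0)$ for the cyclic shift, these matrices are exactly the powers $P^0, P^1, \dots, P^{n-1}$.

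Next I would prove the reverse inclusion. Each cyclic shift $P^k$ lies in $\mathrm{Circ}_n(\mathbb{N})$, and its inverse $P^{n-k}$ is again a circulant matrix with entries in $\mathbb{N}$ satisfying $P^k P^{n-k} = P^n = I$. Hence every circulant permutation matrix is a unit of $\mathrm{Circ}_n(\mathbb{N})$, and combining the two inclusions yields $U(\mathrm{Circ}_n(\mathbb{N})) = \{P^k : 0 \le k \le n-1\}$, the cyclic group of order $n$ generated by the shift.

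The argument is short because the hard work is carried by the preceding theorem; the only genuine point requiring care is the semiring setting. Over $\mathbb{N}$ there is no subtraction, so one cannot invoke determinants or linear algebra over a field—the whole force of the claim rests on the combinatorial characterization of units of $\mathrm{Mat}_n(\mathbb{N})$ as permutation matrices. The main (minor) obstacle is therefore expository: making precise that \emph{the permutation matrices} in the statement means the circulant ones, equivalently the $n$ cyclic shifts, rather than the full set of $n!$ permutation matrices, most of which are not circulant.
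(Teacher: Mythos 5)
Your proof is correct and follows the same route the paper intends: the paper states this result without proof, as an immediate corollary of the preceding theorem on $U(\mathrm{Mat}_n(\mathbb{N}))$, and your argument (a unit of $\mathrm{Circ}_n(\mathbb{N})$ is a unit of $\mathrm{Mat}_n(\mathbb{N})$, hence a permutation matrix, hence a circulant one) is exactly that derivation made explicit. Your added precision is a genuine improvement on the paper's loose phrasing: the units are not all $n!$ permutation matrices but only the $n$ circulant ones, i.e.\ the powers $P^0,\dots,P^{n-1}$ of the cyclic shift $P=\mathrm{Circ}(0,1,0,\dots,0)$, and your verification that each $P^k$ has circulant inverse $P^{n-k}$ over $\mathbb{N}$ closes the argument completely.
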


As indicated in \cite{Zumbragel24}, Shanks' baby-step-giant-step method attacks as well as a Pollard-rho type attack require the existence of a large number of inverses to be efficient methods. Given the reduced number of invertible elements in our semigroup, these attacks would not be efficient.

For the Pohlig-Hellman reduction in \cite{Zumbragel24}, the fact that the semigroup is finite is exploited, and an element $m \in S$ is chosen in such a way that solving the problem on $mS$ becomes easier, thus obtaining the original solution. In our case, the semigroup $\text{Circ}_n(\mathbb{N})$ is infinite, and the applicability on computers arises by bounding the coefficients by a positive natural number $m$. Therefore, if an attacker takes $C \in \text{Circ}_n([0,h])$ and tries to perform the reduction, they will face solving the problem on $\text{Circ}_n([0,th])$, where they effectively have not reduced the number of possible candidates to find the solution. Hence, we do not see how this could provide an advantage for the attacker.

\subsection{O-L attack}
In \cite{oterolopez}, the authors cryptoanalysis the original key exchange protocol presented in \cite{maze}. To do so, given $A,B,M,p(A)Mq(B)\in \rm{Mat_n}(R)$ with $p(x),q(x)\in C_R[X]$ they compute a three non commuting variable polynomial $F[X,Y,Z]\in C_R[X,Y,Z]$ such that $F[A,M,B]=p(A)Mq(B)$  and that, for all $h(x),l(x)\in C_R[X]$ $F[A,h(A)Ml(B),B]=h(A)p(A)Mq(B)l(B)$.

In our case, an attacker may use this algorithm to find a $p_i(x)\in C_r[x]$ such that $p_i(M)=M_i$. However, this does not provide information on the shared key, neither of the private keys. If the attacker uses this algorithm to find $P_i^A[x_0,\cdots, x_{n-1}]\in C_R[x_0,\cdots, x_{n-1}]$ polynomial in $n$ non commuting variables such that $(P_i^A(v))_{i=0}^{n-1}=Av$, this does not assert that $(P_i^A(Bv))_{i=0}^{n-1}=ABv$ as in the original case, identity that is false in general. 

\subsection{Quantum attack}
In \cite{Shor97}, a quantum algorithm for computing discrete logarithms over abelian groups is introduced. Furthermore, in \cite{childs14}, the authors present a generalization of this algorithm that can compute discrete logarithms over semigroups.

In our case, if the commutator set is chosen as powers of a matrix $M$, the aforementioned algorithm would yield a system of linear Diophantine equations, which could be solved to recover the private key.

To prevent such an attack, the commutator set should instead consist of polynomials in $M$, avoiding the use of monoids. Under these circumstances, the authors are not aware of any method to adapt the existing algorithm to compromise the security of the protocol. Moreover, knowledge of the order or pre-order of $M$ does not appear to impact the protocol’s security.

\section{Conclusion}
In this work, we have successfully developed a new key exchange protocol based on the action of circulant matrices on matrices over a congruence-simple semiring. Throughout the study, we effectively addressed the explicit construction of matrices with the required algebraic properties to ensure the correct and secure functioning of the protocol. This involved a careful exploration of the underlying mathematical structures, ensuring that the elements used satisfied the necessary conditions to maintain the system’s security.

Furthermore, we conducted a detailed analysis of the computational cost associated with each stage of the protocol, allowing us to assess its practical feasibility compared to existing approaches. Known attack were also examined, and it was shown that the protocol exhibits strong resistance to these attacks.

\end{document}